\documentclass[12pt,fleqn]{amsart}

\usepackage[latin1]{inputenc}
\usepackage[english]{babel}
\usepackage{indentfirst}
\usepackage{amssymb}
\usepackage{amsthm}
\usepackage{xcolor}
\usepackage[all]{xy}
\usepackage[mathscr]{eucal}
\usepackage{hyperref}

\textwidth=36cc
\baselineskip 16pt
\textheight 620pt
\headheight 20pt
\headsep 20pt
\usepackage{color}
\usepackage{enumerate}
\usepackage{amssymb}
\topmargin 0pt
\footskip 40pt
\parskip 0pt
\oddsidemargin 10pt
\evensidemargin 10pt
%\pagestyle{plain}

%Commands GRZES

\newcommand{\cA}{\mathcal A}

\newcommand{\cC}{\mathcal C}
\newcommand{\cD}{\mathcal D}

\newcommand{\cF}{\mathcal F}
\newcommand{\cG}{\mathcal G}

\newcommand{\fA}{\mathfrak A}
\newcommand{\fB}{\mathfrak B}

\newcommand{\ult}{\protect{ult}}
\newcommand{\clop}{\protect{\rm Clop}}
\newcommand{\la}{\langle}
\newcommand{\ra}{\rangle}
\newcommand{\sub}{\subset}
\newcommand{\eps}{\varepsilon}
\newcommand{\er}{\mathbb R}
\newcommand{\sm}{\setminus}

\newcommand{\vf}{\varphi}
\newcommand{\qu}{\mathbb Q}
\newcommand{\core}{\protect{\rm core}}

%COMANDOS JR

\newcommand{\N}{\mathbb{N}}
\newcommand{\erre}{\mathbb{R}}
\def\epsilon{\varepsilon}
%\newtheorem{theo}{Theorem}[section]
%\newtheorem{lem}[theo]{Lemma}%[section]
%\newtheorem{pro}[theo]{Proposition}%[section]
%\newtheorem{cor}[theo]{Corollary}%[theo]
%\newtheorem{defi}[theo]{Definition}%[section]
%\newtheorem{fact}[theo]{Fact}%[section]
%\newtheorem{rem}[theo]{Remark}
%\newtheorem{exa}[theo]{Example}
%\newtheorem{question}[theo]{Question}

%COMANDOSAVILES
\newcommand{\To}{\longrightarrow}
\newcommand{\rto}{\rightarrow}
%% THEOREM Environments ---------------------------------------------------

\newtheorem{theorem}{Theorem}[section]
\newtheorem{proposition}[theorem]{Proposition}
\newtheorem{corollary}[theorem]{Corollary}
\newtheorem{lemma}[theorem]{Lemma}

\newtheorem{problem}[theorem]{Problem}

\theoremstyle{definition}
\newtheorem{definition}[theorem]{Definition}
\newtheorem{example}[theorem]{Example}
\newtheorem{remark}[theorem]{Remark}

%%% ------------------------------------CLASSES OF COMPACTA
\newcommand{\ueberlein}{\protect{\mbox{\sc UniEberlein}}}
\newcommand{\ale}{\protect{\mbox{\sc UE(1)}}}
\newcommand{\singletonsupported}{\protect{\mbox{\sc UE(1)}}}
\newcommand{\Gulko}{\protect{\mbox{\sc Gul'ko}}}
\newcommand{\Talagrand}{\protect{\mbox{\sc Talagrand}}}

\newcommand{\eberlein}{\protect{\mbox{\sc Eberlein}}}

\newcommand{\ccc}{\protect{\mbox{\sc ccc}}}
\newcommand{\separable}{\protect{\mbox{\sc separable}}}
\newcommand{\SPM}{\protect{\mbox{\sc spm}}}
\newcommand{\corson}{\protect{\mbox{\sc Corson}}}
\newcommand{\caloo}{\protect{\mbox{\sc cal}(\omega_1)}}
\newcommand{\wrn}{\protect{\mbox{\sc WRN}}}
\newcommand{\rn}{\protect{\mbox{\sc RN}}}
\newcommand{\scattered}{\protect{\mbox{\sc scattered}}}

\newcommand{\zerodimensional}{\protect{\mbox{\sc zerodimensional}}}
%%%%-----------------------------------CLASSES of Banach space

%%%%-----------------------------------CLASSES of Boolean algebras
\newcommand{\wrnb}{\protect{\mbox{\sc WRN(B)}}}

\def\Clop{\text{$\operatorname{Clop}$}}
\def\ult{\text{$\operatorname{ult}$}}

\numberwithin{equation}{section}

%\setlength{\parskip}{5mm}

%\date{\today}

\title[Abundance of independent sequences]{Abundance of independent sequences \\in compact spaces and  Boolean algebras}
\author[A. Avil\'{e}s]{Antonio  Avil\'{e}s}
\address{Departamento de Matem\'{a}ticas\\
Facultad de Matem\'{a}ticas\\ Universidad de Murcia\\ 30100 Espinardo, Murcia\\
Spain} \email{avileslo@um.es}
\author[G.\ Mart\'{\i}nez-Cervantes]{Gonzalo Mart\'{\i}nez-Cervantes}
\address{Departamento de Matem\'{a}ticas\\
Facultad de Matem\'{a}ticas\\ Universidad de Murcia\\ 30100 Espinardo, Murcia\\
Spain}
 \email{gonzalo.martinez2@um.es}
\author[G. Plebanek]{Grzegorz Plebanek}
\address{Instytut Matematyczny\\ Uniwersytet Wroc\l awski\\ Pl.\ Grunwaldzki 2/4\\
50-384 Wroc\-\l aw\\ Poland}
\email{grzes@math.uni.wroc.pl}

\subjclass[2010]{Primary 46B22,46B50,54C05; Secondary 06E15,54C25}

\keywords{Boolean algebra, Stone space, independent sequence, weakly Radon-Nikod\'ym compact}
\thanks{A. Avil\'es and  G. Mart\'inez-Cervantes were partially supported by project MTM2017-86182-P (Government of Spain, AEI/FEDER, EU) and by Fundaci\'{o}n S\'{e}neca, ACyT Regi\'{o}n de Murcia under project 20797/PI/18. The research of G. Mart\'inez-Cervantes has been co-financed by the European Social Fund (ESF) and the Youth European Initiative (YEI) under the Spanish S\'eneca Foundation (CARM) (ref. 21319/PDGI/19).
G. Plebanek was partially supported  by the grant 2018/29/B/ST1/00223 from National Science Centre, Poland.}

%\thanks{}

\begin{document}

\begin{abstract}
It follows from a theorem of Rosenthal that a compact space is $ccc$ if and only if every Eberlein continuous image is metrizable.
Motivated by this result, for a class of compact spaces $\cC$ we define its orthogonal $\cC^\perp$ as the class of all compact spaces for which every continuous image in $\cC$ is metrizable. We study how this operation relates classes where centeredness is scarce with classes where it is abundant (like Eberlein and $ccc$ compacta), and also classes where independence is scarce (most notably weakly Radon-Nikodým compacta) with classes where it is abundant. We study these problems for zero-dimensional compact spaces with the aid of Boolean algebras, and show the main difficulties arising when passing to the general setting. Our main results are the constructions of several relevant examples.
\end{abstract}

\maketitle

\section{Introduction}

The class of weakly Radon-Nikod\'{y}m ($\wrn$) compact spaces
was introduced by Glasner and Megrelishvili  \cite{GM12} as a natural superclass of
the well-studied  Radon-Nikod\'{y}m compacta.
A compact space is weakly Radon-Nikod\'{y}m if it is
 homeomorphic to a $weak^\ast$ compact subset  of $X^\ast$, where
 $X$ is a  Banach space containing no isomorphic copy of $\ell_1$.
 There is a useful purely combinatorial characterization of $\wrn$ (see \cite[Theorem 2.1]{Ma17})
 and it reads as follows.

\begin{theorem}\label{Gonzalo}
	A compact space $K$ is $\wrn$ if and only if $K$ can be embedded into some
cube $[0,1]^\kappa$ in such a way that for every $p<q$ the family of pairs
\[(\{x\in K : x(\alpha)\leq p\},\{x\in K : x(\alpha)\geq q\})_{\alpha}\]
contains no infinite independent sequence.
\end{theorem}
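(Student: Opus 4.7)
The plan is to apply Rosenthal's $\ell_1$-theorem in both directions, together with its combinatorial form: a bounded sequence $(x_n)$ in a Banach space $X$ has no weakly Cauchy subsequence if and only if there exist $p<q$ and a subsequence $(x_{n_k})$ such that for every $A\subset\N$ one can find $f\in B_{X^*}$ with $f(x_{n_k})\leq p$ for $k\in A$ and $f(x_{n_k})\geq q$ for $k\notin A$. By Rosenthal's dichotomy, $X$ does not contain $\ell_1$ if and only if every bounded sequence in $X$ has a weakly Cauchy subsequence.

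For the forward implication, I would start from a weak$^*$ compact realization $K\subset B_{X^*}$ with $X\not\supset\ell_1$, enumerate $B_X=\{x_\alpha : \alpha\in\kappa\}$, and embed $K$ into $[-1,1]^\kappa$ (hence, after a translation and rescaling, into $[0,1]^\kappa$) via $k\mapsto (k(x_\alpha))_\alpha$. Suppose, towards a contradiction, that for some $p<q$ and some sequence $(\alpha_n)$ the pairs $(\{k\in K : k(x_{\alpha_n})\leq p\},\{k\in K : k(x_{\alpha_n})\geq q\})$ were independent on $K$. Compactness of $K$ together with the finite intersection property of closed sets gives, for every $A\subset\N$, a $k\in K$ satisfying $k(x_{\alpha_n})\leq p$ for $n\in A$ and $k(x_{\alpha_n})\geq q$ otherwise; this is exactly the witnessing property showing that no subsequence of $(x_{\alpha_n})\subset X$ is weakly Cauchy, contradicting $X\not\supset\ell_1$.

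For the converse, given $K\subset [0,1]^\kappa$ with no infinite independent pair sequence for any $p<q$, I would consider the closed linear subspace $Y$ of $C(K)$ generated by the constant function $1$ and by the coordinate projections $\pi_\alpha(k)=k(\alpha)$. Because $Y$ separates the points of the compact space $K$, the evaluation $k\mapsto\delta_k|_Y$ is a weak$^*$-homeomorphic embedding of $K$ into a bounded subset of $Y^*$, so it remains to verify that $Y$ contains no copy of $\ell_1$. Since a bounded sequence in $C(K)$ is weakly Cauchy precisely when it is pointwise convergent on $K$ (via dominated convergence for regular Borel measures), by Rosenthal's theorem this reduces to showing that every bounded sequence in $Y$ admits a pointwise convergent subsequence on $K$.

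The main obstacle will be precisely this last step. The hypothesis, translated through the Rosenthal dichotomy, guarantees that every sequence $(\pi_{\alpha_n})$ of coordinate projections has a pointwise convergent subsequence on $K$; what is needed is the same statement for arbitrary bounded sequences of norm limits of linear combinations of the $\pi_\alpha$'s. I would appeal to the following stability principle, standard in the Bourgain--Fremlin--Talagrand theory of Rosenthal compacta: if a uniformly bounded family $\cA\subset C(K)$ is such that every sequence in $\cA$ has a pointwise convergent subsequence on $K$, then so does every bounded sequence in the norm-closed linear span of $\cA$. Its proof rests on the fact that the pointwise closure of such an $\cA$ in $\er^K$ consists of Baire-$1$ functions, that the class of bounded Baire-$1$ functions is a vector space stable under uniform limits, and that Rosenthal compacta are angelic, so pointwise convergent subsequences can be extracted from bounded sequences in the span as well.
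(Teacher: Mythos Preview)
The paper does not give its own proof of this theorem; it is quoted from \cite{Ma17} as a known characterization. So there is no in-paper argument to compare against, and I can only evaluate your proposal on its merits.

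Your forward implication is fine. The reverse implication, however, has a genuine gap: the ``stability principle'' you invoke is false. It is not true that if a uniformly bounded $\mathcal A\subset C(K)$ has the property that every sequence admits a pointwise convergent subsequence, then the same holds for bounded sequences in $\overline{\mathrm{span}}(\mathcal A)$. Equivalently, a weakly precompact set may well generate (as a closed linear span) a space containing $\ell_1$. A concrete counterexample in exactly your setting: take $K=2^\omega$ and embed it in $[0,1]^\omega$ via $\pi_n(x)=x(n)/n$. For any $p<q$ with $q>0$ one has $\{\pi_n\ge q\}=\emptyset$ once $n>1/q$, so only finitely many pairs survive and the hypothesis of the theorem holds trivially. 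Yet $Y=\overline{\mathrm{span}}\{1,\pi_n\}$ contains the unscaled coordinates $e_n=n\pi_n$, and $(e_n)$ is isometrically equivalent to the $\ell_1$-basis in $C(2^\omega)$. Thus for this particular embedding your $Y$ contains $\ell_1$, even though $K$ is metrizable and certainly WRN.

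Your BFT justification does not rescue this. That the bounded Baire-1 functions form a vector space closed under uniform limits only tells you that the \emph{norm} closure of $\mathrm{span}(\mathcal A)$ sits inside $B_1(K)$, which is trivial since these are continuous functions; it says nothing about the pointwise closure of bounded subsets of the span, which is what you would need for angelicity to bite. What your hypothesis actually buys is that $\{\pi_\alpha\}$ is weakly precompact, hence $Y$ is weakly precompactly generated. The passage from ``$K$ embeds in the dual of a WPG space'' to ``$K$ embeds in the dual of a space without $\ell_1$'' is a separate step, typically handled by a Davis--Figiel--Johnson--Pe\l czy\'nski--type factorization of the weakly precompact generating set through a space not containing $\ell_1$; this is the missing ingredient, and it is essentially how the result is obtained in \cite{Ma17}.
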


Recall that  a sequence of pairs of disjoint sets $(A^0_n,A^1_n)$ is
said to be
\textit{independent} if
$A^{\varepsilon_1}_1\cap A^{\varepsilon_2}_2\cap\cdots \cap A^{\varepsilon_m}_m\neq\emptyset$ for every $m$  and any choice of $\varepsilon_1,\ldots,\varepsilon_m\in\{0,1\}$.
Accordingly, a sequence of subsets $A_n$ of a space $X$ is independent if
the pairs $(A_n, X\sm A_n)$ are independent.

 If $K$ is Eberlein compact (that is, $K$ is homeomorphic to a weakly compact subset of some Banach space)
 then, by the classical Amir-Lindenstrauss theorem, $K$ embeds into some cube
 $[0,1]^\kappa$ in such a way that the family of sets $\{x\in K : x(\alpha)\geq q\}$
 contains no infinite centered sequence whenever $q>0$.
 Recall that  a family of sets is said to be centered if any finite intersection of its elements is nonempty.

 %If we replace the arbitrary choice of the $\varepsilon_i$ by asking only for the case $\varepsilon_1=\cdots=\varepsilon_m=1$, then the definition of WRN compact transforms into the definition of the well known class of Eberlein compact spaces.

 From this point of view, there is certain parallelism between
 Eberlein compacta, in which centered sequences are scarce,
 and weakly Radon-Nikod\'{y}m compacta,  in which independent sequences are scarce.
 This analogy can be made more transparent when we confine ourselves  to the zero-dimensional case.
 Let us say that a Boolean algebra $\fA$ is weakly Radon-Nikod\'{y}m
 (Eberlein) if the Stone space of $\fA$ is weakly Radon-Nikod\'{y}m compact (Eberlein compact, respectively).

\begin{theorem}\label{intro:wrn}
	A Boolean algebra $\mathfrak{A}$ is weakly Radon-Nikod\'{y}m if and only if
$\fA$ is generated by a family $\cG\sub\fA$ which can be written as $\cG=\bigcup_n\cG_n$
so that no $\cG_n$ contains an infinite independent sequence \cite[Proposition 3.2]{AMP18}.

A Boolean algebra $\mathfrak{A}$ is Eberlein if and only if
$\fA$ is generated by a family $\cG\sub\fA$ which can be written as $\cG=\bigcup_n\cG_n$
so that no $\cG_n$ contains an infinite centered sequence \cite[Remark 2, pg.~107]{Ro74}.
\end{theorem}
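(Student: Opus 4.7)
The plan is to use zero-dimensionality of $K=\mathrm{Stone}(\fA)$ to pass between cube embeddings and families of clopen generators. In both parts, the forward implication approximates real coordinate cuts by clopens, and the backward implication scales clopen generators into a single embedding.

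\textbf{WRN case.} If $K$ is WRN, fix the embedding provided by Theorem \ref{Gonzalo}. Enumerate the rational pairs $p_n<q_n$; for each $n$ and each coordinate $\alpha$, using disjoint-closed-set separation in the zero-dimensional space $K$, pick a clopen $C_{\alpha,n}$ with $\{x:x(\alpha)\leq p_n\}\subseteq C_{\alpha,n}\subseteq\{x:x(\alpha)<q_n\}$, and set $\cG_n=\{C_{\alpha,n}\}_\alpha$. Any infinite independent sequence inside $\cG_n$ would yield one among the pairs $(\{x(\alpha)\leq p_n\},\{x(\alpha)\geq q_n\})$, contradicting Theorem \ref{Gonzalo}; and the $C_{\alpha,n}$ separate the points of $K$, so $\bigcup_n\cG_n$ generates $\fA$. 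Conversely, assume $\fA$ is generated by $\cG=\bigcup_n\cG_n$ (which we may take pairwise disjoint) so that no $\cG_n$ contains an infinite independent sequence. Define $\phi\colon K\to[0,1]^{\cG}$ by $\phi(x)(g)=2^{-n}\chi_g(x)$ for $g\in\cG_n$. For reals $p<q$ the coordinate $g\in\cG_n$ yields a non-trivial pair $(K\sm g,g)$ only when $0\leq p<q\leq 2^{-n}$, restricting $n$ to finitely many values; by pigeonhole, any infinite independent sequence of such pairs must sit inside a single $\cG_n$, contradicting the hypothesis. Hence $\phi$ witnesses that $K$ is WRN.

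\textbf{Eberlein case.} The argument is strictly parallel. Forward: by the Amir-Lindenstrauss theorem $K$ embeds into $[0,1]^\kappa$ with no infinite centered sequence among $\{x:x(\alpha)\geq q\}_\alpha$ for any $q>0$. For each rational pair $0<p<q$ pick a clopen $D_{\alpha,p,q}$ separating $\{x:x(\alpha)\geq q\}$ from $\{x:x(\alpha)\leq p\}$; an infinite centered sequence among the $D_{\alpha,p,q}$'s would pass to the larger sets $\{x:x(\alpha)\geq p\}$, contradicting Amir-Lindenstrauss. Backward: given $\cG=\bigcup_n\cG_n$ generating $\fA$ so that no $\cG_n$ contains an infinite centered sequence, set $\phi(x)(g)=\chi_g(x)/n$ for $g\in\cG_n$. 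For each $x$ and each $n$ the family $\{g\in\cG_n:x\in g\}$ is centered in $K$ (every finite subcollection contains $x$), hence finite by hypothesis, so $\phi(x)\in c_0(\cG)$. Since the weak and pointwise topologies coincide on bounded subsets of $c_0(\cG)$, the pointwise-continuous injection $\phi$ identifies $K$ with a weakly compact subset of a Banach space, showing that $K$ is Eberlein.

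The main obstacle in both equivalences lies in the forward direction, where one must extract clopen generators from an arbitrary cube embedding. Zero-dimensionality of the Stone space is the essential lever: it upgrades the separation of two disjoint closed cube-slices to separation by a clopen set, while the downward heredity of \emph{having no infinite independent} (respectively \emph{centered}) \emph{sequence} guarantees that the property survives when the closed slices are replaced by their clopen approximations.
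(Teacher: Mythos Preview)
The paper does not give its own proof of this theorem; it simply records two results from the literature with citations to \cite{AMP18} and \cite{Ro74}. So there is no in-paper argument to compare against, and your proposal must stand on its own.

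Your Eberlein argument and the backward WRN implication are correct. The forward WRN implication, however, has a real gap. You pick clopens $C_{\alpha,n}$ with
\[
\{x:x(\alpha)\le p_n\}\subseteq C_{\alpha,n}\subseteq\{x:x(\alpha)<q_n\},
\]
and then assert that an infinite independent sequence among the $C_{\alpha,n}$ would ``yield one among the pairs $(\{x(\alpha)\le p_n\},\{x(\alpha)\ge q_n\})$''. But independence of pairs passes \emph{upwards}, not downwards: if $(A_k^0,A_k^1)_k$ is independent and $A_k^i\subseteq B_k^i$, then $(B_k^0,B_k^1)_k$ is independent, while nothing can be said about subsets. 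In your situation the clopen pair $(C_{\alpha,n},K\setminus C_{\alpha,n})$ \emph{contains} the slice pair $(\{x(\alpha)\le p_n\},\{x(\alpha)\ge q_n\})$ coordinatewise, so independence of the clopens does not descend to the slices. Worse, since a clopen and its complement always cover $K$ while the two slices (for $p_n<q_n$) never do, no choice of separating clopen can make the clopen pair sit \emph{inside} the slice pair; the ``downward heredity'' you invoke at the end is therefore unavailable in the independence setting.

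This is precisely where the parallel with the Eberlein case breaks. Centeredness involves only one side, so the inclusion $D_{\alpha,p,q}\subseteq\{x(\alpha)>p\}\subseteq\{x(\alpha)\ge p\}$ lets you push an infinite centered family \emph{up} into a family that Amir--Lindenstrauss forbids. Independence concerns the pair $(C,K\setminus C)$ simultaneously, and a complementary pair can never be squeezed inside a non-covering slice pair. The argument in \cite{AMP18} for the WRN forward direction has to do more than separate slices by clopens; your sketch does not supply that extra step.
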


In a sense, weakly Radon-Nikod\'{y}m compacta and Eberlein compacta  are closely related to the scarcity of independence or centeredness;
   how do we describe the abundance of those two properties?
   In the latter case the answer is the well known world of chain conditions,
   the most basic of which is the countable chain condition ($ccc$).
    A Boolean algebra is $ccc$ if it contains no uncountable pairwise disjoint family,
     which is equivalent to saying that every uncountable subfamily
     contains an infinite centered family.
     A similar definition applies to compact spaces by considering families of open sets.
     Playing a little with the definitions and using a classical theorem of Rosenthal (see
     section \ref{sectionccc}), a kind of duality becomes apparent.

\begin{theorem}\label{Eberleinorthogonal}
	A Boolean algebra $\mathfrak{A}$ is $ccc$ if and only if every Eberlein subalgebra of
$\mathfrak{A}$ is countable. A compact space $K$ is $ccc$ if and only if every Eberlein continuous image of $K$ is metrizable.
\end{theorem}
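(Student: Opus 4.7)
The plan is to reduce both equivalences to two classical facts: \textbf{(A)} the $ccc$ property is preserved by continuous images of compact spaces, which via Stone duality translates into: if $\mathfrak{B}\subseteq\mathfrak{A}$ then the Stone space $S(\mathfrak{B})$ is a continuous image of $S(\mathfrak{A})$; and \textbf{(B)} the classical Rosenthal theorem that every $ccc$ Eberlein compact is metrizable (equivalently, every $ccc$ Eberlein Boolean algebra is countable). Once these are in hand, every direction of both statements drops out quickly.

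For the Boolean algebra statement, the $(\Rightarrow)$ direction proceeds by taking an Eberlein subalgebra $\mathfrak{B}\subseteq\mathfrak{A}$; by (A), $S(\mathfrak{B})$ is a $ccc$ continuous image of $S(\mathfrak{A})$, and being also Eberlein it is metrizable by (B), so $\mathfrak{B}$ is countable. For $(\Leftarrow)$ I would assume $\mathfrak{A}$ is not $ccc$ and pick an uncountable pairwise disjoint family $\{a_\alpha : \alpha<\omega_1\}\subseteq\mathfrak{A}$. Applying the first half of Theorem~\ref{intro:wrn} with $\cG=\cG_0=\{a_\alpha\}$ (no two elements of $\cG_0$ are compatible, so there is no infinite centered subsequence), the subalgebra $\mathfrak{B}$ generated by $\{a_\alpha\}$ is Eberlein and uncountable, as required.

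For the topological statement the $(\Rightarrow)$ direction is immediate from (A) and (B). For $(\Leftarrow)$ I would assume $K$ is not $ccc$ and pick a disjoint family $\{U_\alpha : \alpha<\omega_1\}$ of nonempty open sets, with $x_\alpha\in U_\alpha$. By Urysohn's lemma choose continuous $f_\alpha\colon K\to[0,1]$ with $f_\alpha(x_\alpha)=1$ and $f_\alpha\equiv 0$ outside $U_\alpha$, and set $F(x)=(f_\alpha(x))_{\alpha<\omega_1}\in[0,1]^{\omega_1}$. Since the $U_\alpha$ are pairwise disjoint, each $F(x)$ has at most one nonzero coordinate, so $F(K)$ lies in the ``hedgehog'' $H=\{y\in[0,1]^{\omega_1}: y\text{ has at most one nonzero coordinate}\}$. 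As $H$ is a bounded pointwise-compact subset of $c_0(\omega_1)$, and pointwise and weak topologies agree on bounded subsets of $c_0(\omega_1)$, $H$ is weakly compact and hence Eberlein. The points $F(x_\alpha)=e_\alpha$ are uncountable and have $0$ as their unique accumulation point in $F(K)$, so $F(K)$ contains a copy of the non-metrizable one-point compactification $A(\omega_1)$ and is itself non-metrizable.

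The main obstacle is fact (B), the theorem of Rosenthal that $ccc$ Eberlein compacta are metrizable; its standard proof uses that every Eberlein compact admits a $\sigma$-point-finite $T_0$-separating family of open $F_\sigma$ sets, combined with a Shanin-type argument that every point-finite family of nonempty open sets in a $ccc$ space is countable. For this plan I would simply cite it, as the paper itself alludes to ``a classical theorem of Rosenthal.''
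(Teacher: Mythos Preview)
Your proposal is correct and follows essentially the same route as the paper: the paper's proof (given as Proposition~\ref{occ:2}) uses Rosenthal's theorem for the forward direction and, for the converse, exactly your Urysohn-function diagonal map into the singleton-supported subspace of $[0,1]^{\omega_1}$ to produce a nonmetrizable Eberlein image. The Boolean version is not argued separately in the paper but is the Stone-dual translation you give, including the appeal to Theorem~\ref{intro:wrn} for the pairwise disjoint generating family.
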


This inspires a research program around the following items.

\begin{enumerate}[(1)]
	\item There is an analogy between conditions involving independence and conditions involving centeredness. While the abundance of centeredness (chain conditions) is well studied, its  parallel
world of abundance of independence is not so much.
	\item The abundance and scarcity classes can be studied in duality:
Given a class $\mathcal{C}$ of one kind, we can create the \emph{orthogonal} class $\mathcal{C}^\perp$ of the other kind. In the topological setting, $\mathcal{C}^\perp$ would be the class of compact spaces $K$ whose all continuous images in $\mathcal{C}$ are metrizable. In the Boolean setting, the class of Boolean algebras whose all subalgebras in $\mathcal{C}$ are countable.
	\item To what extent the Boolean and the topological settings differ? Is it possible to reduce the study of compact spaces in these classes to the case of zero-dimensional compacta through taking subspaces or continuous images?
\end{enumerate}

In this paper we  make first steps towards exploring  these ideas.
Section~\ref{occ} shows some basic facts about the behavior of the operation of taking orthogonals of a class.
In Section~\ref{sectionccc} we stay in the centered world,
examining the orthogonal classes there.
Using the concept of orthogonality, Theorem~\ref{Eberleinorthogonal} above states
that $\eberlein^\perp = \ccc$.  Indeed, the orthogonal class of many classical classes of compact spaces studied in the Banach Space setting coincides with the class of $ccc$ compact spaces (see Proposition \ref{occ:2}). In general, as we shall see in Section \ref{sectionccc}, one can interpret a large number of (essentially) known results in this manner.

In the subsequent sections we investigate
the class of compact spaces $\wrn^\perp$ and the related class $\wrnb^\perp$ of Boolean algebras. In particular,
in Section \ref{sectionOrthogonalWRN} we provide a simple characterization of Boolean algebras in $\wrnb^\perp$, whereas in Section \ref{SectionCorsonandWRNB} we construct, under $MA_{\omega_1}$, an example of a nonmetrizable zero-dimensional Corson compact space whose clopen algebra belongs to $\wrnb^\perp$. The difficulties in checking whether this example belongs to $\wrn^\perp$ motivate us to study the relation between $\wrn^\perp$ and $\wrnb^\perp$.
Section \ref{between} is mainly devoted to the construction of a zero-dimensional nonmetrizable compact space whose clopen algebra belongs to $\wrnb^\perp$ but having a nonmetrizable WRN continuous image (so not in $\wrn^\perp$).

One can also consider a natural stronger version of orthogonality, 
requiring compact spaces to be hereditarily orthogonal to the original class. 
In the final section we 
deal with the strong orthogonal of the class of zero-dimensional compacta, previously
investigated by Piotr Koszmider. Here the basic problem is whether such an orthogonal class
is nontrivial, i.e.~if there is a nonmetrizable compactum $K$ such that every continuous image of $K$
contains no nonmetrizable zero-dimensional compact subspaces. 
Koszmider  \cite{Kosz16} presented several consistent constructions of such spaces $K$ discussing
some additional properties they may have.
Building on Koszmider's ideas, in Section \ref{ocZDs} we make our modest contribution to the subject
showing that, under various set-theoretic assumptions, these examples can be 
weakly Radon-Nikod\'ym.

\section{Orthogonal classes of compact spaces}\label{occ}

For a class $\cC$ of compact spaces we introduce two orthogonal classes
as follows.

\begin{definition}
\begin{enumerate}[(a)]
\item $\mathcal{C}^\perp$ is  the class of those compact spaces $K$ such that every
continuous image  of $K$  that
belongs to $\mathcal{C}$ is metrizable;
\item $\mathcal{C}^{(\perp)}$ is the class of those compact spaces $K$ such that every
continuous image  of any closed subspace of $K$  that belongs to $\mathcal{C}$ is metrizable.
\end{enumerate}
\end{definition}

Obviously, $\mathcal{C}^{(\perp)}\subseteq \mathcal{C}^{\perp}$ for every class of compact spaces $\cC$.
Note that if $\cC\sub\cD$ then $\cD^\perp\sub\cC^\perp$. We also record the following general facts.

\begin{lemma}\label{occ:1}
	Let $\cC$ be a class of compact spaces that is closed under continuous images. Then
	
	\begin{enumerate}[(i)]
		\item $\cC\sub\cC^{\perp\perp}$;
		\item $\cC^\perp = \cC^{\perp\perp\perp}$.
	\end{enumerate}
\end{lemma}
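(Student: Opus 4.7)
The plan is to treat $\cC\mapsto\cC^\perp$ as a Galois-type operation on classes of compact spaces, ordered by inclusion. The two statements are then the standard formal consequences of such a setup, and the only genuine input is the hypothesis that $\cC$ is closed under continuous images. I would also first record the (immediate) monotonicity already noted in the paper: $\cC\sub\cD$ implies $\cD^\perp\sub\cC^\perp$, since any continuous image of $K$ in $\cD$ that happens to lie in $\cC$ is still to be excluded.

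For (i), I would simply unwind the definitions. Take $K\in\cC$ and let $L$ be a continuous image of $K$ with $L\in\cC^\perp$; we want $L$ metrizable. Because $\cC$ is closed under continuous images, $L\in\cC$. But $L\in\cC^\perp$ means that every continuous image of $L$ belonging to $\cC$ is metrizable, and $L$ itself (via the identity map) is such an image in $\cC$. Hence $L$ is metrizable and $K\in\cC^{\perp\perp}$.

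For (ii), the key preliminary observation is that $\cC^\perp$ is automatically closed under continuous images, regardless of whether $\cC$ is: if $L$ is a continuous image of $K\in\cC^\perp$ and $M$ is a continuous image of $L$ lying in $\cC$, then $M$ is a continuous image of $K$ lying in $\cC$, hence metrizable. Applying part (i) to the class $\cC^\perp$ therefore gives $\cC^\perp\sub(\cC^\perp)^{\perp\perp}=\cC^{\perp\perp\perp}$. The reverse inclusion is monotonicity applied to (i): from $\cC\sub\cC^{\perp\perp}$ we get $(\cC^{\perp\perp})^\perp\sub\cC^\perp$, i.e.\ $\cC^{\perp\perp\perp}\sub\cC^\perp$.

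There is no serious obstacle here; the argument is a purely formal Galois-connection style verification. The only small point I would be careful about is not mistakenly invoking (i) directly for the class $\cC^\perp$ before noting that it is always closed under continuous images, since the statement of (i) assumes this closure. Everything else is a mechanical unwinding of the two definitions.
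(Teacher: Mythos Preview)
Your proof is correct and follows essentially the same approach as the paper's: the paper proves (i) by the contrapositive (if $K\notin\cC^{\perp\perp}$ then $K\notin\cC$) whereas you argue directly, but the content is identical; for (ii) both you and the paper observe that $\cC^\perp$ is automatically closed under continuous images, apply (i) to $\cC^\perp$ for one inclusion, and use monotonicity of $(\cdot)^\perp$ together with (i) for the other.
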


\begin{proof}
	If $K\notin \cC^{\perp\perp}$ then $K$ has a nonmetrizable continuous image $L\in\cC^\perp$.  Then, in particular, $L\notin \cC$ so
	$K\notin\cC$; this proves $(i)$.
	
	Note that $\cC^\perp$ is automatically closed under continuous images.  Hence, $\cC^\perp \sub  \cC^{\perp\perp\perp}$ by $(i)$.
	For the reverse inclusion, apply  $(i)$ to the previously mentioned fact that $\cC\sub\cD$ implies $\cD^\perp\sub\cC^\perp$.
\end{proof}

Although $\cC^\perp$ is always closed under continuous images, it might not be closed under subspaces. Nevertheless, $\mathcal{C}^{(\perp)}$ is always closed under subspaces and continuous images by the very definition and
the following fact, which is analogous to \cite[Lemma 4.1]{Kosz16}.

\begin{lemma}
	For any class $\mathcal{C}$ of compact spaces, $\cC ^{(\perp)}$ coincides with the class of those compact spaces $K$ such that every subspace of every continuous image of $K$ is metrizable whenever it is in $\cC$.
\end{lemma}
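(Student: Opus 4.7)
The plan is to prove both inclusions directly, using the key fact that any member of $\mathcal{C}$ is, by assumption, a compact space.

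For the easy direction, suppose $K$ satisfies the condition that every subspace of every continuous image of $K$ that lies in $\cC$ is metrizable. Let $L\sub K$ be a closed subspace and let $f\colon L\to M$ be a continuous surjection with $M\in\cC$. I would form the quotient $N=K/\!\sim$ where $x\sim y$ iff $x=y$, or $x,y\in L$ and $f(x)=f(y)$. Since $L$ is closed in $K$ and $f$ is a continuous surjection onto a compact space, the relation $\sim$ is closed, so the quotient map $q\colon K\to N$ is continuous, $N$ is a continuous image of $K$, and the restriction $q|_L$ factors as a homeomorphism from $M$ onto $q(L)\sub N$. In particular $M$ is (homeomorphic to) a subspace of the continuous image $N$ of $K$, and since $M\in\cC$ the hypothesis gives that $M$ is metrizable.

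For the reverse direction, suppose $K\in\cC^{(\perp)}$, and let $N$ be a continuous image of $K$ via $g\colon K\to N$, with $M\sub N$ a subspace belonging to $\cC$. Because $\cC$ consists of compact spaces, $M$ is compact and therefore closed in $N$. Set $L=g^{-1}(M)$, which is closed in $K$. Then $g|_L\colon L\to M$ is a continuous surjection of a closed subspace of $K$ onto a member of $\cC$, so by the definition of $\cC^{(\perp)}$ the space $M$ is metrizable.

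The only mildly delicate point is verifying in the first paragraph that $\sim$ is indeed a closed equivalence relation (so the quotient is Hausdorff) and that $q(L)$, which is compact and hence closed in $N$, is genuinely homeomorphic to $M$ rather than merely a continuous image of it; both reduce to the standard fact that a continuous bijection between compact Hausdorff spaces is a homeomorphism, applied to the induced map $L/\!\sim\,\to M$. No additional hypotheses on $\cC$ (such as closure under continuous images or subspaces) are needed for either direction.
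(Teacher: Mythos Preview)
Your proof is correct. The second direction (your ``reverse direction'') is exactly the paper's argument: a subspace $M\subseteq N$ that lies in $\cC$ is compact, hence closed, and is therefore the image of the closed set $g^{-1}(M)\subseteq K$.

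For the other direction the paper proceeds differently. Given a closed $L\subseteq K$ and a surjection $f\colon L\to M$, the paper embeds $M$ into a cube $[0,1]^\kappa$ and applies the Tietze extension theorem coordinatewise to obtain $\widehat{f}\colon K\to[0,1]^\kappa$; then $M$ sits inside the continuous image $\widehat{f}(K)$. Your quotient construction $N=K/\!\sim$ achieves the same end without invoking Tietze: it exhibits $M$ as a subspace of a continuous image of $K$ purely by means of the fact that a quotient of a compact Hausdorff space by a closed equivalence relation is again compact Hausdorff. Both routes are short; yours is arguably more self-contained, while the paper's has the slight advantage that the extension lemma it uses (Tietze for cubes) is standard and requires no verification, whereas you must check that $\sim$ is closed and that $q(L)\cong M$. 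Your closing remarks handle these points adequately.
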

\begin{proof}
	It is immediate that every subspace of any continuous image of $K$ is a continuous image of a subspace of $K$.
On the other hand, if $K_1$ is a subspace of $K$ and $f:K_1 \rightarrow L$ is a
continuous surjection then,
assuming that  $L$ is embedded in $[0,1]^\kappa$ for some $\kappa$,
by the Tietze extension theorem,
there is a continuous extension $\widehat{f}:K \rightarrow [0,1]^\kappa$.
Consequently,  $L$ is a subspace of a continuous image of $K$.
\end{proof}

We finish this section with the following elementary fact.

\begin{lemma}
\label{lemhereditarilyCperp}
For any class $\mathcal{C}$ of compact spaces,
$\cC ^{(\perp)}$ is the class of hereditarily $\cC^\perp$ compact spaces, i.e.~the class of those compact spaces $K$ such that every closed subspace of $K$ belongs to $\cC^\perp$.
\end{lemma}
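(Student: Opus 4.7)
The statement is essentially a matter of unpacking the two definitions, so the plan is to check the two implications directly.

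For the forward direction, suppose $K\in\cC^{(\perp)}$ and let $K_1\sub K$ be a closed subspace. To verify that $K_1\in\cC^\perp$, I would take an arbitrary continuous image $L$ of $K_1$ with $L\in\cC$ and note that, since $K_1$ is a closed subspace of $K$, $L$ is automatically a continuous image of a closed subspace of $K$ lying in $\cC$. By the definition of $\cC^{(\perp)}$, $L$ must be metrizable, and this is exactly what is needed to conclude $K_1\in\cC^\perp$.

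For the converse, suppose every closed subspace of $K$ lies in $\cC^\perp$, and let $K_1\sub K$ be closed together with a continuous image $L$ of $K_1$ such that $L\in\cC$. By hypothesis, $K_1\in\cC^\perp$, and since $L$ is a continuous image of $K_1$ with $L\in\cC$, the definition of $\cC^\perp$ forces $L$ to be metrizable. Thus $K\in\cC^{(\perp)}$, as desired.

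There is no real obstacle here: both directions are immediate once one observes that the two clauses in the definition of $\cC^{(\perp)}$ --- ``closed subspace'' and ``continuous image'' --- can be separated, since $\cC^\perp$ already handles the ``continuous image in $\cC$'' part. In fact the argument is so direct that no use of the previous lemma (swapping the order of taking subspaces and continuous images via Tietze) is required.
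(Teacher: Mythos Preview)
Your proof is correct and follows essentially the same approach as the paper's: both directions are obtained by directly unpacking the definitions of $\cC^\perp$ and $\cC^{(\perp)}$, with no additional ingredients needed.
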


\begin{proof}
If $K\in \cC ^{(\perp)}$ and $L \subseteq K$ is a subspace, then every continuous image of $L$ belonging to $\cC$ is metrizable, so $L \in \cC^\perp$.

On the other hand, if $K$ is  hereditarily $\cC^\perp$ and $L \subseteq K$ is a subspace, then $L \in \cC^\perp$ and therefore any continuous image of $L$ belonging to $\cC$ is metrizable, so $K \in \cC ^{(\perp)}$.
\end{proof}

In the next sections we study the classes $\cC^\perp$ and  $\cC ^{(\perp)}$ for several well known classes of compact spaces $\cC$. Most often, we focus on the class $\cC^\perp$ and we do not pay too much attention to the class $\cC ^{(\perp)}$. This is due to the fact that, once we have a characterization for the class $\cC^\perp$, the best characterization for the class $\cC ^{(\perp)}$ that we are able to obtain is the one given by a direct application of Lemma \ref{lemhereditarilyCperp}. Nevertheless, sometimes this characterization for the class $\cC ^{(\perp)}$ is obscure and even not  useful enough to easily determine whether this class is nontrivial.
This will be the case for $\cC$ the class of zero-dimensional compacta (see Section \ref{ocZDs})

\section{Orthogonal classes and ccc compacta}\label{sectionccc}

 Rosenthal \cite{Ro69} proved that a compact space $K$ is $ccc$
if and only if every weakly compact subset of the Banach space $C(K)$ is separable.
This classical result may be translated to saying that Eberlein $ccc$ compact spaces are metrizable.
 Recall that an Eberlein compactum (resp.~uniformly Eberlein compactum) is
  a compact space homeomorphic to a weak compact subset of a Banach space (a Hilbert space, respectively).
For all undefined classes of compacta that are mentioned below
we refer the reader to Negrepontis \cite{Ne84} or Fabian  \cite{FabianGD}.

We denote classes of compacta by their names written in small capitals so, for example,
  {\sc Eberlein} ($\ueberlein$)  is the class of Eberlein compacta (uniform Eberlein compacta, respectively)
  while $\ccc$ stands for all compact $ccc$ spaces.
Moreover, we denote by  $\ale$  the class of `singleton-supported compact spaces',
i.e.~those compact spaces which embed into $\{x\in \erre^\kappa: x(\gamma)\neq0 \mbox{ for at most one }\gamma<\kappa\}$ for some $\kappa$. Clearly, every compact space in $\ale$ is Uniformly Eberlein (recall Farmaki's characterization of uniformly Eberlein compacta in \cite[Theorem 2.10]{Farmaki}).
%Moreover, we have the following.

\begin{remark}
\label{RemClassesWhoseOrthogonalIsCCC}
The following chain of implications holds
$$ \singletonsupported \subset \ueberlein \sub \eberlein \sub \Talagrand \sub \Gulko. $$
\end{remark}

Indeed, Talagrand \cite{Ta75} proved that every Eberlein compact space is
Talagrand compact and, motivated by Rosenthal's Theorem mentioned above, he asked whether every
Talagrand or Gul'ko $ccc$ compact space is metrizable
(see \cite[Probl\`eme 5]{Ta77} and \cite[Probl\`eme 7.9]{Ta79}).
These questions were solved positively by Argyros and Negrepontis in \cite{AN83}.
The following may be seen as a summary of these results.

\begin{proposition}\label{occ:2}
	Let $\cC$ be any of the classes in Remark \ref{RemClassesWhoseOrthogonalIsCCC}.
Then $\cC^\perp = \ccc$. As a consequence,  $\cC ^{(\perp)}$ coincides with the class of hereditarily ccc compact spaces.
\end{proposition}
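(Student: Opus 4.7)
My plan is to exploit the chain of inclusions in Remark \ref{RemClassesWhoseOrthogonalIsCCC} together with the observation recorded just before Lemma \ref{occ:1} that the operation $\cC\mapsto\cC^\perp$ is inclusion-reversing. Applying this to the five classes in the chain yields
$$\Gulko^\perp \sub \Talagrand^\perp \sub \eberlein^\perp \sub \ueberlein^\perp \sub \singletonsupported^\perp,$$
so the proposition reduces to establishing the two extremal containments $\ccc\sub\Gulko^\perp$ and $\singletonsupported^\perp\sub\ccc$; sandwiching then forces equality throughout.

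For $\ccc\sub\Gulko^\perp$, I would observe that a continuous image of a $ccc$ compact space is itself $ccc$, and then invoke the Argyros--Negrepontis theorem from \cite{AN83}, already recalled in the discussion above, which asserts that every Gul'ko $ccc$ compactum is metrizable. This is the only nontrivial input from the literature.

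For $\singletonsupported^\perp\sub\ccc$ I would argue by contrapositive, constructing from a failure of $ccc$ a nonmetrizable singleton-supported continuous image. Assume $K$ is not $ccc$ and fix an uncountable pairwise disjoint family $(U_\alpha)_{\alpha<\omega_1}$ of nonempty open subsets of $K$; pick $x_\alpha\in U_\alpha$ for each $\alpha$, and use Urysohn's lemma to select continuous $f_\alpha\colon K\to[0,1]$ with $f_\alpha(x_\alpha)=1$ and $f_\alpha\equiv 0$ on $K\sm U_\alpha$. The diagonal map $F\colon K\to \erre^{\omega_1}$ defined by $F(x)=(f_\alpha(x))_{\alpha<\omega_1}$ lands in the singleton-supported subspace, because each point $x\in K$ lies in at most one $U_\alpha$; hence $F(K)\in\singletonsupported$. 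To verify that $F(K)$ is nonmetrizable I would consider the open sets $W_\alpha=\{y\in\erre^{\omega_1}\colon y(\alpha)>1/2\}$: the intersection $W_\alpha\cap F(K)$ contains $F(x_\alpha)$, and singleton-supportedness of $F(K)$ forces $y(\beta)=0$ for all $\beta\neq\alpha$ whenever $y\in W_\alpha\cap F(K)$, so these intersections are nonempty and pairwise disjoint. Thus $F(K)$ is not even $ccc$, let alone metrizable.

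The consequence for $\cC^{(\perp)}$ is then immediate from Lemma \ref{lemhereditarilyCperp} applied to $\cC^\perp=\ccc$. I do not foresee any serious obstacle: modulo the Argyros--Negrepontis theorem the argument is a short Urysohn-function construction, and the main conceptual point is simply the order-reversal of $\perp$.
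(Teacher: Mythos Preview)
Your proposal is correct and follows essentially the same route as the paper: reduce to the two extremal inclusions via order-reversal of $\perp$, use Argyros--Negrepontis for $\ccc\sub\Gulko^\perp$, and build a nonmetrizable singleton-supported image from an uncountable disjoint open family via Urysohn functions for $\singletonsupported^\perp\sub\ccc$, then invoke Lemma~\ref{lemhereditarilyCperp}. Your write-up is in fact a bit more explicit than the paper's (you spell out why the image fails $ccc$, whereas the paper simply asserts nonmetrizability), but the argument is the same.
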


\begin{proof}
As $\singletonsupported \subset \Gulko$, we have $ \Gulko^\perp \subseteq \singletonsupported^\perp $.
Clearly, the class $\ccc$ is closed under continuous images.
So if a continuous image $L$ of $K\in\ccc$ is Gul'ko then $L$ is metrizable by the result from \cite{AN83}.
Thus,  $\ccc \subseteq  \Gulko^\perp$.

On the other hand, if $K$ is not $ccc$ and $\{W_\alpha : \alpha<\omega_1\}$
are pairwise disjoint nonempty open sets,  one can take for every $\alpha$ a
nonzero continuous function $f_\alpha:K\To [0,1]$  such that $f_\alpha|_{K\setminus W_\alpha} = 0$.
Then the diagonal mapping maps $K$ onto a nonmetrizable subset of
\[\{x\in \erre^{\omega_1}: x(\gamma)\neq0 \mbox{ for at most one }\gamma<\omega_1\},\]
 so
	$K\notin \singletonsupported^\perp $.
This proves that $\ccc =  \singletonsupported^\perp  =\Gulko^\perp$ and the first part of the proposition follows from Remark \ref{RemClassesWhoseOrthogonalIsCCC}.
%Note that all classes in Remark \ref{RemClassesWhoseOrthogonalIsCCC} are closed under subspaces, but the class $\ccc$ is not. Thus,

The second assertion follows from Lemma \ref{lemhereditarilyCperp}.
\end{proof}

\begin{corollary}\label{occ:3}
	$\ccc=\ccc^{\perp\perp}$.
\end{corollary}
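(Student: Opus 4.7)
The plan is to deduce this corollary directly from the identity $\eberlein^\perp = \ccc$ established in Proposition~\ref{occ:2}, together with the general formula $\cC^\perp = \cC^{\perp\perp\perp}$ from Lemma~\ref{occ:1}(ii). The cleanest route is to apply Lemma~\ref{occ:1}(ii) to the class $\eberlein$, for which I first need to note that $\eberlein$ is closed under continuous images (this is the classical Benyamini--Rudin--Wage theorem, a fact used implicitly throughout the excerpt).

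Granting this, Lemma~\ref{occ:1}(ii) gives $\eberlein^\perp = \eberlein^{\perp\perp\perp}$. Substituting $\eberlein^\perp = \ccc$ from Proposition~\ref{occ:2} into both sides yields
\[
\ccc \;=\; \eberlein^\perp \;=\; \eberlein^{\perp\perp\perp} \;=\; (\eberlein^\perp)^{\perp\perp} \;=\; \ccc^{\perp\perp},
\]
which is exactly the corollary.

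A slightly more hands-on variant, avoiding the triple-orthogonal formula, is the following. The inclusion $\ccc \subseteq \ccc^{\perp\perp}$ is immediate from Lemma~\ref{occ:1}(i), since $\ccc$ is closed under continuous images. For the reverse inclusion $\ccc^{\perp\perp} \subseteq \ccc$, I would check that $\eberlein \subseteq \ccc^\perp$: if $K$ is Eberlein then every continuous image of $K$ is Eberlein, and any such image that happens to be $\ccc$ is metrizable by Rosenthal's theorem. Taking orthogonals reverses inclusions, so $\ccc^{\perp\perp} \subseteq \eberlein^\perp = \ccc$ by Proposition~\ref{occ:2}.

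There is essentially no obstacle here; the statement is a formal consequence of the orthogonality calculus of Lemma~\ref{occ:1} and the input $\eberlein^\perp = \ccc$. The only point requiring mild care is citing that $\eberlein$ (or, in the second variant, just remembering that continuous images of Eberlein compacta are Eberlein) is closed under continuous images, since Lemma~\ref{occ:1} is stated under exactly that hypothesis.
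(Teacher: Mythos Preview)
Your proof is correct and follows essentially the same route as the paper's: both derive the corollary from the chain $\ccc=\eberlein^\perp=\eberlein^{\perp\perp\perp}=\ccc^{\perp\perp}$ using Lemma~\ref{occ:1}(ii) applied to $\eberlein$ together with Proposition~\ref{occ:2}. Your explicit mention of the Benyamini--Rudin--Wage theorem (needed to verify the hypothesis of Lemma~\ref{occ:1}) is a worthwhile addition, and your alternative hands-on variant is also fine but not used in the paper.
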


\begin{proof}
	Applying  Lemma \ref{occ:1} and Proposition \ref{occ:2}   \[\ccc=\eberlein^\perp=\eberlein^{\perp\perp\perp}=\ccc^{\perp\perp},\]
and we are done.
\end{proof}

A natural class generalizing all the classes mentioned in Remark
\ref{RemClassesWhoseOrthogonalIsCCC} is the class of Corson compacta, i.e.~those compact spaces $K$ that are homeomorphic to a subspace of a $\Sigma$-product
\[\Sigma(\erre^\kappa):=\{x\in \erre^\kappa: x \mbox{ has countable support}\},\]
for some cardinal $\kappa$.
As we will recall below, it is consistent that there are $ccc$ Corson compact spaces which are not metrizable;
in such a case  $\corson^\perp$ is a proper subclass of  $\ccc$.

In the  spirit of Talagrand's question mentioned above,
one may wonder what is the largest class whose orthogonal is $\ccc$.
Note that Corollary \ref{occ:3} provides {\em some} answer ---
the largest class whose orthogonal is $\ccc$ is just $\ccc^\perp$. Nevertheless, no handy characterization of $\ccc^\perp$ in terms of classical classes of compact spaces
seems to be available.

\begin{proposition}\label{occ:4}
	Under Marin's axiom MA$_{\omega_1}$, $\ccc^\perp = \separable^\perp$.
\end{proposition}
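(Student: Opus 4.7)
The inclusion $\ccc^\perp\subseteq\separable^\perp$ holds in ZFC: every separable compactum is ccc (a countable dense set meets only countably many pairwise disjoint nonempty open sets), so $\separable\subseteq\ccc$, and the orthogonal operation reverses inclusions. The content of the proposition is the reverse inclusion under $\mathrm{MA}_{\omega_1}$, which I would prove by contrapositive: if $K$ has a nonmetrizable ccc continuous image $L$, I will produce a nonmetrizable separable continuous image of $K$.

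The plan has two steps. The first is a ZFC reduction: every nonmetrizable compact space admits a nonmetrizable continuous image of weight at most $\omega_1$. I obtain such an image $L'$ of $L$ as the inverse limit of a strictly refining transfinite tower $(L_\alpha)_{\alpha<\omega_1}$ of metrizable continuous quotients of $L$. Starting from a one-point space, at each stage $\alpha$ the inverse limit $L_\alpha^{<}:=\varprojlim_{\beta<\alpha}L_\beta$ is a metrizable continuous quotient of $L$; one then forms $L_\alpha$ by adjoining a continuous function $f_\alpha:L\to[0,1]$ that does not factor through $L_\alpha^{<}$. Such an $f_\alpha$ exists since $L$ is nonmetrizable and hence $L\to L_\alpha^{<}$ is not a homeomorphism, so some pair of distinct points of $L$ is identified and can be separated by Urysohn's lemma. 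The limit $L'=\varprojlim_{\alpha<\omega_1}L_\alpha$ has weight at most $\omega_1$; if it were metrizable, a countable base for $L'$ would be pulled back from finitely many $L_{\alpha_n}$, hence from $L_\beta$ with $\beta=\sup_n\alpha_n<\omega_1$, forcing $L'\cong L_\beta$ and contradicting the strict refinement at stage $\beta+1$. Being a continuous image of the ccc space $L$, $L'$ is itself ccc.

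The second step invokes the classical consequence of $\mathrm{MA}_{\omega_1}$ that every compact ccc space of weight at most $\omega_1$ is separable. Applied to $L'$, it produces a separable nonmetrizable continuous image of $K$, contradicting $K\in\separable^\perp$ and completing the proof. This $\mathrm{MA}_{\omega_1}$ consequence is the only place where the set-theoretic hypothesis is used and is the main technical input of the argument; it is proved by applying $\mathrm{MA}_{\omega_1}$ to a suitable ccc forcing built from a $\pi$-base of size $\omega_1$, whose generic filter assembles a countable dense subset of $L'$.
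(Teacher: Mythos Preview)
Your proof is correct and follows essentially the same approach as the paper: both argue the trivial inclusion from $\separable\subseteq\ccc$, then prove the reverse by taking a nonmetrizable ccc continuous image, passing to a further continuous image of weight $\omega_1$, and invoking the standard consequence of $\mathrm{MA}_{\omega_1}$ that such a space is separable. The paper leaves the weight-$\omega_1$ reduction and the $\mathrm{MA}_{\omega_1}$ application as routine, whereas you spell them out; one minor slip is that a countable base for $L'$ is pulled back from \emph{countably} many factors $L_{\alpha_n}$ (each basic open from finitely many), but your use of $\beta=\sup_n\alpha_n<\omega_1$ shows you had the right picture.
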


\begin{proof}
	Clearly, $\ccc^\perp \subseteq \separable^\perp$. If $K\notin \ccc^\perp$ then
$K$ has a nonmetrizable $ccc$ continuous image $L$.
	It is easy to construct  a continuous image $L_1$ of $L$ of weight
$\omega_1$. Then $L_1$ is also  $ccc$ and the standard application of Martin's axiom
implies that $L_1$  is separable. Consequently,  $K\notin\separable^\perp$. 	
\end{proof}

Recall that  $\omega_1$ is  a precaliber of a topological space $X$ if every
uncountable family of nonempty open subsets of $X$ contains an uncountable centered subfamily.
If we take a compact space $K$ then the notion of a precaliber coincides with that of a caliber:
$\omega_1$ is a caliber of $K$ if
for every uncountable family $\cG$ of  nonempty  open  subsets of $K$,
$|\{G\in\cG: x\in G\}|\ge \omega_1$ for some $x\in K$
(cf.\ \cite{CN82}). Write $\caloo$ for the class of compact spaces having caliber $\omega_1$.

\begin{remark}\label{occ:5}
	$\caloo=\ccc$ is equivalent to MA$_{\omega_1}$. Indeed, the condition $K\in\caloo$ is equivalent to saying that every point-countable
	family of open subsets of $K$ is countable; the latter is called Shanin's condition, see Todorcevic \cite{To00}.
\end{remark}

\begin{proposition}\label{occ:6}
	$ \corson^\perp=\caloo$. Thus,  $\corson^\perp=\ccc$ is equivalent to  MA$_{\omega_1}$.
\end{proposition}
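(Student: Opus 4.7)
The plan is to establish the equality $\corson^\perp = \caloo$; the equivalence with $\mathrm{MA}_{\omega_1}$ then drops out of Remark~\ref{occ:5}. The heart of the argument is the (classical) fact that a Corson compactum with caliber $\omega_1$ is metrizable, so I would state and prove this as a preliminary.

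For the preliminary, suppose $L$ is Corson and embed $L \subseteq \Sigma(\er^\kappa)$. Let $S = \{\gamma<\kappa : \pi_\gamma\not\equiv 0 \text{ on } L\}$ and, for each $\gamma\in S$, set $V_\gamma = \{y\in L : y(\gamma)\neq 0\}$: an open, nonempty subset of $L$. Because every $y\in\Sigma(\er^\kappa)$ has countable support, the family $\{V_\gamma : \gamma\in S\}$ is point-countable in $L$. If $S$ were uncountable, then either uncountably many $V_\gamma$ are pairwise distinct, which contradicts caliber $\omega_1$ (a point-countable family cannot contain a point lying in uncountably many of its members), or uncountably many $\gamma\in S$ share a common value $V_\gamma = V^*$, in which case any $y\in V^*$ would satisfy $y(\gamma)\neq 0$ on an uncountable set of coordinates, contradicting $y\in\Sigma(\er^\kappa)$. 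Therefore $S$ is countable and $L$ is metrizable, being a compact subset of $\er^S$.

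The inclusion $\caloo \subseteq \corson^\perp$ is now immediate: caliber $\omega_1$ is preserved under continuous surjections (pull back an uncountable family of open sets in the image to get one in $K$, then push forward the witnessing point), so every Corson continuous image of a $K\in\caloo$ is a Corson space with caliber $\omega_1$, hence metrizable. For the reverse inclusion, assume $K\notin\caloo$; by Remark~\ref{occ:5} there is an uncountable point-countable family $\{U_\alpha : \alpha<\omega_1\}$ of nonempty open subsets of $K$. Pick $x_\alpha\in U_\alpha$ and, by Urysohn's lemma, a continuous $f_\alpha:K\to[0,1]$ with $f_\alpha(x_\alpha)=1$ and $f_\alpha\equiv 0$ on $K\sm U_\alpha$. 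The diagonal map $\Phi(x) = (f_\alpha(x))_{\alpha<\omega_1}$ sends $K$ into $\Sigma(\er^{\omega_1})$ thanks to point-countability of $\{U_\alpha\}$, so $L=\Phi(K)$ is Corson. Since $\Phi(x_\alpha)_\alpha=1$ for every $\alpha$, the set $S$ associated with $L$ equals $\omega_1$, and the preliminary claim forces $L$ to be nonmetrizable. Thus $K\notin\corson^\perp$.

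The final assertion $\corson^\perp=\ccc \Leftrightarrow \mathrm{MA}_{\omega_1}$ then follows by combining $\corson^\perp=\caloo$ with Remark~\ref{occ:5}. I do not anticipate a real obstacle; the one place that needs a little care is the two-case dichotomy inside the preliminary claim, where both the "many distinct $V_\gamma$" and the "one $V_\gamma$ shared uncountably often" scenarios must be dispatched, and each is handled cleanly by the countable-support restriction in $\Sigma(\er^\kappa)$.
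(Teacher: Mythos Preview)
Your proof follows essentially the paper's approach and is correct in substance. One slip to fix: in the reverse inclusion you write that ``the preliminary claim forces $L$ to be nonmetrizable'' from $S=\omega_1$, but the preliminary as you stated it (Corson $+$ caliber $\omega_1$ $\Rightarrow$ metrizable) runs the wrong direction for that conclusion. What you actually need is the elementary fact that a compact metrizable --- hence separable --- subspace of a $\Sigma$-product has $|S|\le\omega$ (a countable dense set has countable total support, and any $V_\gamma$ meeting it forces $\gamma$ into that support); so $S=\omega_1$ gives nonmetrizability directly, without invoking the preliminary at all. Incidentally, your two-case dichotomy in the preliminary is correct but avoidable: treating $\{V_\gamma:\gamma\in S\}$ as an \emph{indexed} family, caliber $\omega_1$ immediately produces a point lying in $V_\gamma$ for uncountably many indices $\gamma$, contradicting countable support --- this is how the paper argues.
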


\begin{proof}
	Note first that if $L$ is nonmetrizable Corson compact then $\omega_1$ is not a caliber of $L$. Indeed,
	suppose that $L$ is embedded into the $\Sigma$-product $\Sigma(\erre^\kappa)$ for some $\kappa$.
	Then the set $V_\alpha=\{x\in K: x(\alpha)\neq 0 \}$ is nonempty for uncountably many $\alpha$'s.
	On the other hand, no uncountable subfamily of $V_\alpha$'s can have nonempty intersection by the very
 definition of  a $\Sigma$-product.
	
	If $\omega_1$ is a caliber of a compact space $K$ then it is also a caliber of every continuous image $L$ of $K$.
	Hence such $L$ is either metrizable or not Corson by above.
	
	If $\omega_1$ is not  a caliber of $K$ then take a family $\{G_\alpha:\alpha<\omega_1\}$ witnessing that fact.
	We can of course assume that $G_\alpha\neq K$ for every $\alpha$. If $g_\alpha: K\to [0,1]$ is a nonzero continuous function vanishing outside $G_\alpha$ then the diagonal mapping $\Delta_{\alpha} g_\alpha:K\To[0,1]^{\omega_1}$ maps
	$K$ onto  a nonmetrizable Corson compactum.
\end{proof}

\begin{remark}\label{occ:6.5}
	Arguing as above we can check that the failure of MA$_{\omega_1}$, that is $\caloo\neq\ccc$, is equivalent to the existence of
	zerodimensional nonmetrizable $ccc$ Corson compacta.
\end{remark}

We recall  below that Proposition \ref{occ:4} does not hold in ZFC. We have three natural classes of compacta
\[\separable\subseteq \SPM\subseteq \ccc,\]
where $\SPM$ is the class of compact spaces carrying a strictly positive probability Borel measure.
Therefore,
\[\ccc^{\perp} \subseteq \SPM^{\perp} \subseteq \separable^{\perp}.\]

Obviously, every $\aleph_0$-monolithic space (i.e.\
a space in which all separable subspaces are metrizable),
in particular every Corson compactum,  is in $\separable^\perp$.
There is a large number of examples, starting from those  constructed by
Haydon, Kunen and Talagrand under
CH and described in \cite{Ne84}, of a nonseparable
Corson compact space $K\in\SPM$. They indicate that the class $\SPM\cap \separable^\perp$ is nontrivial
(i.e.\ contains nonmetrizable spaces) under CH.
Kunen and van Mill \cite{KM95} showed that, in this context,  CH may be relaxed
 to saying that Martin's axiom fails for measure algebras.

Another result showing that Proposition \ref{occ:4} may be violated is based on classical Gaifman's example of
a $ccc$ compact space admitting no strictly
positive measure. The proof below follows closely the description of the Gaifman space given by Comfort and Negrepontis
\cite[Theorem 6.23]{CN82} and its modification from \cite{AMN}.

By a Lusin set we mean an uncountable $Z\sub\er$ such that $Z\cap N$ is countable for every nowhere dense $N\sub\er$.
Recall that a family $\cA$ of subsets of some set $X$ is said to be adequate
if $B\sub A\in\cA $ always implies $B\in\cA$ and $A\in\cA$ for every $A\sub X$
satisfying $[A]^{<\omega}\sub\cA$.  Those two properties imply that
an adequate family $\cA$ defines a compact subset of $2^X$ in a natural way.

Below we write $P(K)$ for the space of regular probability Borel measures on a given compact space.
Recall that $K\in\SPM$ is equivalent to saying that $K$ is the support of some regular measure $\mu\in P(K)$ (see
\cite{CN82}). Recall also that if $g:K\to L$ is a continuous surjection of compacta
	 and $\nu\in P(L)$ then there is $\mu\in P(K)$ such that $g[\mu]=\nu$
	(meaning that $\nu(B)=\mu(g^{-1}[B])$ for every Borel set $B\sub L$).

\begin{proposition}\label{occ:7}
	If there is a Lusin set then there is a nonseparable Corson compact space in $\ccc\cap \SPM^\perp$.
\end{proposition}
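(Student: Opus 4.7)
The strategy is to realize the desired compactum via an adequate family on a Lusin set, combining Gaifman's construction with the Corson and ccc phenomena that Lusin sets provide. Let $Z\sub\er$ be a Lusin set and define
\[\cA=\{A\sub Z: A\text{ is nowhere dense in }\er\},\]
which is readily checked to be adequate. Let $K=K_\cA\sub 2^Z$ be the resulting compactum, whose points we identify with characteristic functions $\chi_A$ for $A\in\cA$. Corson-ness is immediate: every $A\in\cA$ is nowhere dense, hence $A=A\cap Z$ is countable by the Lusin property, so each $\chi_A$ has countable support and $K$ sits inside a $\Sigma$-product. For nonseparability, note that if $\{\chi_{A_n}:n\in\omega\}$ were dense in $K$, then for each $z\in Z$ the clopen set $V_z:=\{x\in K:x(z)=1\}$ would have to meet this set, yielding $z\in A_n$ for some $n$; thus $Z\sub\bigcup_n A_n$, a meager subset of $\er$, contradicting the Lusin property applied to the uncountable $Z$. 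The ccc property follows by a $\Delta$-system refinement applied to the positive parts $F_\alpha^+$ of an uncountable family of pairwise disjoint basic clopen sets $U_\alpha$, the disjointness condition together with the $\Delta$-system structure forcing the leaves to produce an uncountable subset of $Z$ that lies in a fixed finite union of nowhere dense sets, again contradicting the Lusin hypothesis.

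The core of the proof is $K\in\SPM^\perp$. Assume $g:K\to L$ is a continuous surjection onto a nonmetrizable compactum $L$ admitting a strictly positive $\nu\in P(L)$, and lift $\nu$ to some $\mu\in P(K)$ with $g[\mu]=\nu$. Then $g$ maps $\mathrm{supp}(\mu)$ onto $L$, so it suffices to show $\mathrm{supp}(\mu)$ is metrizable. Set $Y:=\{z\in Z:\mu(V_z)>0\}$. For $z\notin Y$, the open set $V_z$ has measure zero, so $\mathrm{supp}(\mu)\cap V_z=\emptyset$; equivalently, every $x\in\mathrm{supp}(\mu)$ vanishes at each $z\in Z\sm Y$. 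Thus $\mathrm{supp}(\mu)$ embeds via the coordinate projection into $2^Y$, and once $Y$ is shown to be countable, $\mathrm{supp}(\mu)$ inherits a countable base and becomes metrizable, finishing the proof.

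The main obstacle is precisely the countability of $Y$. Because the clopen sets $V_z$ are far from pairwise disjoint, $\sigma$-additivity yields nothing directly, and the argument requires exploiting the Lusin structure in a nontrivial way. The expected route, mirroring the measure-theoretic argument in the Gaifman space and its variant in \cite{AMN}, is to suppose for contradiction that for some $\eps>0$ the set $Y_\eps=\{z\in Z:\mu(V_z)>\eps\}$ is uncountable, and then combine a Rosenthal-type disjointification of $\{V_z:z\in Y_\eps\}$ with the structural fact that witnesses of positive $\mu$-measure in $K$ are concentrated on characteristic functions of nowhere dense subsets of $\er$. Packaging this disjointification with the Lusin property of $Z$ should yield an uncountable subset of $Z$ inside a meager set, contradicting the hypothesis on $Z$. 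This conversion of topological scarcity (nowhere density of elements of $\cA$ on a Lusin set) into measure-theoretic scarcity (countability of $Y$) is the single delicate point in the argument, and it is where the specific Lusin hypothesis is used decisively, beyond its role in guaranteeing Corson-ness and nonseparability.
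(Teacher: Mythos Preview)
Your construction fails at the very first step: the family $\cA=\{A\sub Z: A\text{ is nowhere dense in }\er\}$ is \emph{not} adequate. Every finite subset of $\er$ is nowhere dense, so $[Z]^{<\omega}\sub\cA$; adequacy would then force $Z\in\cA$. But $Z$ itself cannot be nowhere dense, since otherwise $Z=Z\cap\overline{Z}$ would be countable by the Lusin property, contradicting that $Z$ is uncountable. Consequently $K_\cA=\{\chi_A:A\in\cA\}$ is not closed in $2^Z$: given any $B\sub Z$ and any finite $F\sub Z$, the finite (hence nowhere dense) set $B\cap F$ witnesses that $\chi_B$ lies in the closure of $K_\cA$. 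Thus $\overline{K_\cA}=2^Z$, which is separable and not Corson, so none of the subsequent claims can go through for the intended compact space.

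The paper avoids this by using Gaifman's combinatorial definition: one fixes an enumeration $\{T_n:n\ge 2\}$ of rational intervals together with pairwise disjoint subintervals $T_{n1},\ldots,T_{n,n^2}\sub T_n$, and declares $A\in\cA$ iff $|\{k\le n^2:A\cap T_{nk}\neq\emptyset\}|\le n$ for every $n$. This condition is genuinely finitary, so adequacy is automatic, and the specific quantitative form is exactly what drives the measure argument: if $\mu(V_z)\ge\eps$ for uncountably many $z$, one finds $n>1/\eps$ and points $z_k\in T_{nk}$ with $\mu(V_{z_k})\ge\eps$, and an averaging inequality forces more than $n$ of the $V_{z_k}$ to meet, contradicting the defining bound. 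Your sketch for $\SPM^\perp$ (``Rosenthal-type disjointification \ldots should yield'') never reaches a comparable mechanism; even had the adequate family been correct, that part remained a plan rather than a proof.
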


\begin{proof}
	Fix a Lusin set $Z$.
Let $\{T_n: n\ge 2\}$ be an enumeration of intervals in $\er$ with rational endpoints.
	For every $n$ we pick a pairwise disjoint family $\{T_{nk}: 1\le k\le n^2\}$ of nonempty subintervals of $T_n$.
	
	We define an adequate family $\cA$ of subsets of $Z$ as follows:
	\[A\in\cA\iff (\forall n)\; |\{k\le n^2: A\cap T_{nk}\neq\emptyset\}|\le n.\]
	Then we consider a compact space $K\sub 2^Z$ defined by $\cA$, that is
	\[ \chi_A\in K \iff A\in\cA.\]
	The space $K$ is $ccc$; in fact it is proved in \cite{CN82} that the family of all nonempty open subsets of
$K$ may be written
	as $\bigcup_n  \cG_n$ where every $\cG_n$ contains no pairwise disjoint subfamily of size $n$.
	
	The space $K$ is Corson compact since $\cA$ contains no uncountable set ---
note that  if $X\sub Z$ is uncountable
	then $X$ is not meager in $\er$ so it is dense in some $T_n$ and intersects every $T_{nk}$; therefore $X\notin\cA$.
	\medskip
	
	\noindent {\sc Claim.}
	Every $\mu\in P(K)$ has a metrizable support.
	\medskip
	
	Indeed, fix such a measure $\mu$ and write $V_z=\{x\in K: x(z)=1\}$.
If we suppose that $\mu$ is not concentrated on a metrizable subspace of $K$ then $\mu(V_z)>0$ for uncountably many $z\in Z$. Then there is $\eps>0$ and an uncountable $Z_0\sub Z$ such that
	$\mu(V_z)\ge\eps$ for $z\in Z_0$. By the Lusin property $Z_0$ is not meager in $\er$
so its closure contains  $T_n$ for infinitely many $n$; fix such $n$ with $n>1/\eps$.
	Pick $z_k\in T_{nk}\cap Z_0$ for every $k\le n^2$ and consider the  family of sets $V_{z_k}$, $k\le n^2$ each of measure $\ge\eps$.
	It follows that there is $I\sub \{1,2,\ldots, n^2\}$ with $|I|\ge \eps\cdot n^2$ and such that
	$W=\bigcap_{i\in I} V_{z_i}\neq\emptyset$. To see this note that, otherwise, we would have \[ \sum_{i=1}^{n^2}\chi_{V_{z_i}}<\varepsilon\cdot n^2, \mbox{ and }
\varepsilon\cdot n^2 \leq \int_K \sum_{i=1}^{n^2}\chi_{V_{z_i}}\;{\rm d}\mu < \varepsilon\cdot n^2.\]
 On the other hand $W=\emptyset$ by the very definition of $\cA$, as $\eps\cdot n^2> n$, which is a contradiction.
 \medskip

	Using Claim we can check that $K\in \SPM^\perp$: let $g:K\to L$ be a continuous surjection.
	If $L$ is nonmetrizable and $\nu\in P(L)$ then there is $\mu\in P(K)$ such that $g[\mu]=\nu$.
By Claim,  $\mu(K_0)=1$ for some metrizable $K_0\sub K$;
	consequently, $\nu$ lives on a metrizable subspace $g[K_0]$ of $L$ and hence $\nu$ cannot be strictly positive.
\end{proof}
\bigskip

In order to provide more examples of spaces in $\ccc^\perp$, we use the following simple lemma.

\begin{lemma}
	\label{auxlemmaclasses}
	Let $\cC_1$, $\cC_2$ and $\cC_3$ be three classes of compact spaces with $\cC_2$ stable under continuous images and such that $\cC_1\cap \cC_2 \subseteq \cC_3$. Then, $\cC_3 ^\perp \cap \cC_2 \subseteq \cC_1^\perp$.
\end{lemma}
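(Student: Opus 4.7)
The plan is a direct unpacking of the definition of orthogonality. Let me take an arbitrary $K\in \cC_3^\perp\cap \cC_2$ and show $K\in \cC_1^\perp$. Concretely, I pick a continuous image $L$ of $K$ which happens to belong to $\cC_1$, and I must verify that $L$ is metrizable.

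The key observation is that $\cC_2$ is assumed to be closed under continuous images, so from $K\in\cC_2$ I immediately get $L\in \cC_2$. Combining this with $L\in \cC_1$, the hypothesis $\cC_1\cap \cC_2\sub \cC_3$ yields $L\in \cC_3$. Now $L$ is a continuous image of $K$ that lies in $\cC_3$, and since $K\in \cC_3^\perp$, $L$ must be metrizable. This is exactly the defining property of $\cC_1^\perp$, so $K\in \cC_1^\perp$ as required.

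There is no real obstacle here: the whole argument is just a diagram chase through the two inclusions, using only the closure of $\cC_2$ under continuous images at one step. The only thing to be careful about is not to confuse the roles of the three classes, since the conclusion swaps $\cC_1$ and $\cC_3$ on the two sides of the orthogonality symbol.
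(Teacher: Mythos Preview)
Your proof is correct and follows essentially the same route as the paper's: take $K\in\cC_3^\perp\cap\cC_2$, push the membership in $\cC_2$ to a continuous image $L\in\cC_1$, deduce $L\in\cC_3$, and invoke $K\in\cC_3^\perp$ to conclude metrizability. The only cosmetic difference is that the paper also pushes $\cC_3^\perp$ to $L$ and phrases the conclusion as $L\in\cC_3^\perp\cap\cC_3$, but this amounts to the same argument.
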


\begin{proof}
	Suppose that  $K \in \cC_3 ^\perp \cap \cC_2$ and take a continuous image $L$ of $K$ which is in $\cC_1$. 
Then, since $\cC_3 ^\perp$ and $\cC_2$ are stable under continuous images, we have $L \in \cC_3 ^\perp \cap \cC_2 \cap \cC_1 \subseteq  \cC_3 ^\perp \cap  \cC_3 $, so $L$ is metrizable.
\end{proof}

Let $\cC_{dm}$ be the class of  compacta $K$ having the property that  
 every continuous image of $K$  has a dense metrizable subspace. 
 Notice that this class is stable under continuous images. It contains the class
of fragmentable compact spaces and the (consistently) more general class of Stegall compact spaces (see, c.f., \cite[Theorems 3.1.5, 3.1.6 and 5.1.11]{FabianGD}).
%
%Concerning compact spaces with dense metrizable subsets, in Theorem 9.14 (and the comments preceding it) of \cite{To84} it is shown that there is a compact space $K_1 \times K_2$ with a dense metrizable space such that $K_1$ and $K_2$ do not have a dense metrizable subspace, and
%it is also shown the existence of a Corson compact space with no dense metrizable subspace.

\begin{lemma}
	\label{auxlemmacccdm}
	Any $ccc$ compact space in $\cC_{dm}$ is separable.
\end{lemma}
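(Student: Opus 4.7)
The plan is to apply the defining property of $\cC_{dm}$ to $K$ itself. Since $K$ is trivially a continuous image of $K$, the hypothesis $K\in\cC_{dm}$ yields a dense metrizable subspace $D\subseteq K$.

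Next, I would show that $D$ inherits the $ccc$ property from $K$. Indeed, if $\{U_\alpha:\alpha<\omega_1\}$ is an uncountable pairwise disjoint family of nonempty open subsets of $D$, then writing $U_\alpha=V_\alpha\cap D$ for some open $V_\alpha\subseteq K$, the family $\{V_\alpha:\alpha<\omega_1\}$ consists of nonempty open sets in $K$ (by density of $D$) that are pairwise disjoint (since any two of them, if they met, would meet $D$ by density, contradicting disjointness of the $U_\alpha$'s). This contradicts $K$ being $ccc$.

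Now $D$ is a metrizable $ccc$ space, and it is a classical fact that in the class of metrizable spaces $ccc$ is equivalent to separability. Hence $D$ admits a countable dense subset, which is then dense in $K$ by transitivity of density. Therefore $K$ is separable, as required.

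There is no real obstacle here; the statement follows by stringing together the definition of $\cC_{dm}$, the transfer of $ccc$ to dense subspaces, and the standard equivalence of $ccc$ and separability for metrizable spaces.
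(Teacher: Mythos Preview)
Your proof is correct and follows essentially the same approach as the paper: take a dense metrizable $D\subseteq K$ (using that $K$ is a continuous image of itself), observe that $D$ is $ccc$ as a dense subspace of a $ccc$ space, hence separable by metrizability, and conclude that $K$ is separable. You simply spell out in more detail the transfer of $ccc$ to dense subspaces and the transitivity of density, which the paper takes for granted.
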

\begin{proof}
	Take $K\in\ccc\cap \cC_{dm}$; since $K$ is a continuous image of itself, there is a metrizable dense subset $D\subseteq K$. As dense subspaces of $ccc$ spaces are $ccc$, we conclude that $D$ is a $ccc$ metrizable topological space, so it is separable and, therefore, $K$ is also separable.
\end{proof}

\begin{proposition} \label{occ:8}
	$\separable ^ \perp \cap \cC_{dm} \subseteq \ccc ^\perp$. In particular, any $\aleph_0$-monolithic fragmentable compactum $K$ is in $\ccc^\perp$.
\end{proposition}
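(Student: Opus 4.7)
The plan is a direct application of Lemma \ref{auxlemmaclasses} to the triple $\cC_1 = \ccc$, $\cC_2 = \cC_{dm}$, $\cC_3 = \separable$. The hypotheses of that lemma are already essentially in place in the surrounding discussion: the class $\cC_{dm}$ is closed under continuous images (a continuous image of a continuous image of $K$ is a continuous image of $K$, so the defining property is preserved), and the inclusion $\ccc \cap \cC_{dm} \subseteq \separable$ is precisely Lemma \ref{auxlemmacccdm}. Feeding these into Lemma \ref{auxlemmaclasses} immediately yields $\separable^\perp \cap \cC_{dm} \subseteq \ccc^\perp$, which is the first assertion.

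For the \textit{in particular} statement, I would verify the two memberships separately for an $\aleph_0$-monolithic fragmentable compactum $K$. Fragmentable compacta lie in $\cC_{dm}$ by the classical fact recalled just before the lemma (fragmentability passes to continuous images and forces a dense metrizable subspace). For the inclusion $\aleph_0$-monolithic $\subseteq \separable^\perp$, I would argue as follows: given a continuous surjection $\pi : K \to L$ with $L$ separable, pick a countable dense $D \subseteq L$, lift it to a countable $D' \subseteq K$ with $\pi(D') = D$, and set $F = \overline{D'}$. Then $F$ is a separable closed subspace of $K$, hence metrizable by $\aleph_0$-monolithicity, so its continuous image $\pi(F)$ is a metrizable compactum. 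Since $\pi(F)$ is closed in $L$ and contains the dense set $D$, it equals $L$, so $L$ is metrizable, as required.

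I do not expect any real obstacle here; the content of the proposition is packaged into Lemma \ref{auxlemmaclasses} and Lemma \ref{auxlemmacccdm}, and the only mildly non-trivial step is the lift-and-close argument showing that $\aleph_0$-monolithicity transfers to separable continuous images (rather than just to separable subspaces, where the definition is tautological).
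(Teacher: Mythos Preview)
Your proposal is correct and follows exactly the same route as the paper: apply Lemma~\ref{auxlemmaclasses} with $\cC_1=\ccc$, $\cC_2=\cC_{dm}$, $\cC_3=\separable$, using Lemma~\ref{auxlemmacccdm} for the required inclusion, and then invoke fragmentable $\subseteq \cC_{dm}$ and $\aleph_0$-monolithic $\subseteq \separable^\perp$ for the particular case. The only difference is that you spell out the lift-and-close argument for $\aleph_0$-monolithic $\subseteq \separable^\perp$, whereas the paper treats this as already observed (it was stated as obvious earlier in Section~\ref{sectionccc}).
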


\begin{proof}
Lemma \ref{auxlemmacccdm}  says that we can apply  Lemma \ref{auxlemmaclasses} to $\cC_1=\ccc$,
$\cC_2=	\cC_{dm}$ and $\cC_3=\separable$ and this yields 
$\separable ^ \perp \cap \cC_{dm} \subseteq \ccc ^\perp$.
	
The last part of the proposition follows from the fact that any $\aleph_0$-monolithic compact space is in $\separable ^\perp$.
\end{proof}

Ordinal interval spaces $[0,\alpha]$ serve as examples of compact spaces $K$ as above; 
they are scattered and therefore fragmentable.

We finish this section with a remark: Every nonmetrizable continuous image of a compact space $K$ has a further continuous image of weight $\omega_1$. Therefore, if a class $\cC$ is stable under continuous images, then a compact space  $K\in \mathcal{C}^\perp$ if and only if no continuous image of $K$ of weight $\omega_1$ belongs to $\mathcal{C}$.  In this sense, there is some similarity between the questions we are considering here and reflection problems of the sort studied by Tkachuk \cite{Tk12} and Tkachenko and  Tkachuk \cite{TT15}:
Given a class of compacta $\cC$, can we say that $K\in\cC$ once we know that
every continuous image of $K$ of weight $\le \omega_1$
is in $\cC$? Answering two questions from \cite{TT15}, Magidor and Plebanek \cite{MP17} gave a consistent example of a scattered non Corson compact space all of whose continuous images of weight $\omega_1$ are uniform Eberlein. Such a space would be a kind of extreme example in the class $\ccc^\perp$.
%subject to some set-theoretic assumption,
%answered two questions from  \cite{TT15}.
%
%\begin{theorem}\label{occ:9}
%	It is relatively consistent that  there is a scattered compact space $K$ with the third derivative empty such that
%	
%	\begin{enumerate}[(i)]
%		\item $K$ is not Corson compact;% (in fact it is not $\omega_2$-Corson compact in the sense of \cite{Kal00});
%		\item if $L$ is a continuous image of $K$ and $w(L)\le\omega_1$ then $L$ is uniform Eberlein compact.
%	\end{enumerate}
%\end{theorem}
%
%Such a space $K$ was constructed in \cite{MP17} assuming that there is a stationary set $S\sub\omega_2$ of ordinals of cofinality $\omega$ such that $S\cap\alpha$ is stationary for no $\alpha<\omega_2$.
%
%
%\begin{proposition}\label{occ:10}
%	The space $K$ from Theorem \ref{occ:9} belongs to $\ccc^\perp$.
%\end{proposition}
%
%\begin{proof}
%	Take a nonmetrizable continuous image $L$ of $K$.
%	Then take a nonmetrizable continuous image $L_1$ of $L$ of weight $\omega_1$.
%	Then $L_1$ is
%	a nonmetrizable Eberlein compactum, so it is not ccc. Consequently, $L\notin\ccc$.
%\end{proof}
%
%\begin{remark}
%	In general, by the same argument,
%if $\cC$ is stable under continuous images, then a compact space $K \in \cC^\perp$
%if and only if every continuous image of $K$ of weight $\leq \omega_1$ is in $\cC ^\perp$.
%\end{remark}

\section{The orthogonal class of (weakly) Radon-Nikod\'ym compacta}
\label{sectionOrthogonalWRN}

A compact space is Radon-Nikod\'ym  if it is homeomorphic to a $weak^\ast$ compact subset
of $X^\ast$ for some Banach space $X$ which is Asplund, that
is every separable  subspace $Y$ of $X$ has a separable dual.
Note that if $X$ is Asplund then it does not contain an isomorphic copy of $\ell_1$;
hence $\wrn\sub\rn$. The reverse inclusion does not hold in the following strong sense.
%
%Recall that, given a topological space $K$ and a metric $\rho$, we say that $K$ is fragmented by
%$\rho$ if for every nonempty subspace $Y\sub K$ and $\eps>0$ there is an open set $U\sub K$
%such that the set $Y\cap U$ is nonempty and has the $\rho$-diameter $<\eps$.
%Recall also that a compact space $K$ is in $\rn$ if and only if it is fragmented by
%a lower-semicontinuous metric, see \cite[Corollary 6.7]{Nam87}.

\begin{lemma}
	\label{LemmaSplitInterval}
	Every hereditarily Lindel\"{o}f compact space is in $\rn ^\perp$. 
In particular, the class $\wrn \cap \rn ^\perp$ is nontrivial, i.e.\ it contains nonmetrizable
compacta.
%	Indeed, it belongs to the orthogonal class of compacta fragmented by an arbitrary  metric.
\end{lemma}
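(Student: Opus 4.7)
The plan is to handle the two assertions in sequence: first, show that any hereditarily Lindelöf compact $K$ lies in $\rn^\perp$; then exhibit the split interval as a concrete nonmetrizable witness for $\wrn\cap\rn^\perp$. For the first part, I would begin with the routine observation that hereditary Lindelöfness passes to continuous images: given $f\colon K\to L$ continuous and onto, any open cover of a subspace $M\subseteq L$ pulls back to an open cover of $f^{-1}(M)$, which admits a countable subcover by HL of $K$, whose $f$-image recovers a countable subcover of $M$. Hence every RN continuous image of $K$ is itself hereditarily Lindelöf and RN, and the task reduces to the classical statement: \emph{a hereditarily Lindelöf Radon-Nikod\'ym compactum is metrizable}.

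The quickest proof I know of this classical statement uses Namioka's characterization of RN compacta via fragmentability by a lower semicontinuous metric, from which one extracts a $\sigma$-isolated (equivalently, $\sigma$-relatively discrete) network separating points. In a hereditarily Lindelöf space every relatively discrete family of subsets must be countable (otherwise one would extract a strictly increasing $\omega_1$-chain of open sets, contradicting HL), so such a network is automatically countable, and a compact Hausdorff space with a countable network is metrizable. Combining this with the reduction above finishes the first part.

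For the second assertion, I would let $K$ be the split interval (double arrow) $\mathbb{A}$. It is a compact LOTS derived from $[0,1]$, hence hereditarily Lindelöf, and well known to be nonmetrizable, so the first part already gives $\mathbb{A}\in\rn^\perp$. To verify $\mathbb{A}\in\wrn$ via Theorem~\ref{Gonzalo}, I embed $\mathbb{A}$ into $\{0,1\}^{[0,1]}\subseteq[0,1]^{[0,1]}$ by sending $(t,0)\mapsto\chi_{[0,t)}$ and $(t,1)\mapsto\chi_{[0,t]}$. A direct check shows that for $\alpha<\beta$ in $[0,1]$ one has $\{x\in\mathbb{A}:x(\alpha)=0\}\cap\{x\in\mathbb{A}:x(\beta)=1\}=\emptyset$; thus the pairs $(\{x(\alpha)=0\},\{x(\alpha)=1\})_\alpha$ form a chain under the natural ordering and cannot contain any infinite independent sequence, verifying the WRN condition.

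The step I expect to be the main obstacle is the classical fact \emph{hereditarily Lindelöf RN $\Rightarrow$ metrizable}; this is not a formality, relying on Namioka's structure theorem for RN compacta and a network/exhaustion argument, so in a finished write-up I would cite it from the standard references (Namioka's original paper or Fabian's monograph) rather than reproving it here.
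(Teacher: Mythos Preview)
Your proof is correct and follows essentially the same route as the paper: both reduce the first assertion to Namioka's theorem that a hereditarily Lindel\"of Radon--Nikod\'ym compactum is metrizable (the paper simply cites \cite[Theorem~5.8]{Nam87}, while you sketch the fragmentability/$\sigma$-isolated-network argument behind it), and both invoke the split interval for the second assertion (the paper cites \cite[Corollary~8.8]{GlasnerMegrelishvili14} for its WRN property, whereas you verify it directly via Theorem~\ref{Gonzalo}). Your parenthetical justification via an ``$\omega_1$-chain of open sets'' is not quite the right reason a relatively discrete family in an HL space is countable---the clean argument is that choosing one point from each member yields a discrete subspace, and HL spaces have countable spread---but since you intend to cite the result anyway this is harmless.
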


\begin{proof}
Let $K$ be an hereditarily Lindel\"{o}f compact space and suppose that 
$L$ is a Radon-Nikod\'ym continuous image of $K$. 
Since the Lindel\"{o}f property is stable under continuous images, $L$ is hereditarily Lindel\"{o}f and therefore metrizable by \cite[Theorem 5.8]{Nam87}. Thus, $K$ belongs to $\rn ^\perp$.

The second statement of the lemma follows from the fact that there 
exist nonmetrizable hereditarily Lindel\"{o}f compact spaces which are WRN. 
One example is given by the \textit{double arrow space}, also known as the \textit{split interval} 
(see \cite[Example 5.9]{Nam87} and \cite[Corollary 8.8]{GlasnerMegrelishvili14}).
\end{proof}
%
%\begin{lemma}
%\label{LemmaSplitInterval}
%The split interval is in  $\wrn \cap \rn ^\perp$.
%Indeed, it belongs to the orthogonal class of compacta fragmented by an arbitrary  metric.
%\end{lemma}
%
%\begin{proof}
%The split interval is the space
%\[ SI=\left((0,1]\times\{0\}\right) \cup  \left( [0,1)\times\{1\}\right),\]
%equipped with the topology defined by lexicographic order.
%
%Let $f:SI \rightarrow L$ be a continuous surjection and consider any metric $\rho$ on $L$.
%If the set $D=\{t\in (0,1): f(t,0)\neq f(t,1)\}$ is countable then $L$ is easily seen to be metrizable.
%On the other hand, if $D$ is uncountable then there are an uncountable set
%$B \subseteq D$ and $\varepsilon>0$, such that $\rho(f(t,0),f(t,1))> \varepsilon $ for every $t \in B$.
%Without loss of generality, $B$ is dense in itself. Consider $A=\{f(t,0),f(t,1): t \in B\}$.
%
%If we suppose that $\rho$ fragments $L$
%then there exists an open set $U\subset L$ such that $A\cap U\neq\emptyset$
%has diameter smaller than $\varepsilon$. But $f^{-1}(A\cap U)$ is an open set in
%$\{(t,0),(t,1): t \in B\} $. Since $B \subset [0,1]$ has no isolated points,
%we conclude that $f^{-1}(A\cap U)$ contains a pair of points of the form $(t,0)$, $(t,1)$ with $t \in B$. But then $\rho(f(t,0),f(t,1))>\varepsilon$,  a contradiction with the fact that $A\cap U$ has diameter smaller than $\varepsilon$.
%
%Thus, $L$ is not fragmented by no metric.
%\end{proof}
%\bigskip

Since $\eberlein\sub\rn\sub\wrn$, we have
\[\wrn^\perp \sub \rn^\perp\sub\eberlein^\perp=\ccc.\]

By Lemma \ref{LemmaSplitInterval}, the set $\wrn \cap \rn ^\perp$ is nontrivial. Let us observe
that $\wrn^\perp\neq\ccc$:
for instance, 
$\beta\mathbb{N}\in\ccc\sm \wrn^\perp$  since $\beta\mathbb{N}$
can be continuously mapped onto any separable compact space.

Dyadic spaces provide natural examples of compact spaces that are in 
 $\wrn^\perp$.  Recall that a compact space is said to be
dyadic if it is a continuous image of the Cantor cube $2^\kappa$ for some  $\kappa$.

\begin{proposition}\label{dyadic}
	Every dyadic compactum belongs to $\wrn^\perp$.
\end{proposition}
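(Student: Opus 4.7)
The plan is to reduce the statement to the well-known fact that the Cantor cube $2^{\omega_1}$ is not $\wrn$. I will argue the contrapositive: if $K$ is dyadic and $L$ is a nonmetrizable continuous image of $K$, then $L\notin\wrn$.

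First, continuous images of dyadic compacta are dyadic (by composing $2^\kappa\twoheadrightarrow K\twoheadrightarrow L$), so $L$ is itself a nonmetrizable dyadic compactum, and in particular has weight $\ge\omega_1$. Next I would invoke the classical theorem of Engelking and Pe\l czy\'nski: every dyadic compactum of weight $\tau$ continuously surjects onto $[0,1]^\tau$. Composing with a coordinatewise surjection $[0,1]^\tau\twoheadrightarrow 2^\tau$ (built from any fixed surjection $[0,1]\twoheadrightarrow 2^\omega$, using $\omega\cdot\tau=\tau$ for infinite $\tau$), we see that $L$ admits a continuous surjection onto $2^{\omega_1}$.

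It remains to combine two standard facts: the class $\wrn$ is stable under continuous images, and $2^{\omega_1}\notin\wrn$. The second may be verified by noting that the centred coordinate projections $\pi_n-\frac{1}{2}$ for $n<\omega$ form an $\ell_1$-equivalent sequence in $C(2^{\omega_1})$ (directly from the independence of the coordinates), so $2^{\omega_1}$ cannot be weak$^*$-embedded in the dual of any $\ell_1$-free Banach space. If $L\in\wrn$, the surjection $L\twoheadrightarrow 2^{\omega_1}$ would force $2^{\omega_1}\in\wrn$, a contradiction.

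The main obstacle I foresee is verifying $2^{\omega_1}\notin\wrn$ in a self-contained way. Via Theorem \ref{intro:wrn} applied to the clopen algebra, one would need to show that no generating family of the free Boolean algebra on $\omega_1$ generators admits a partition into countably many subfamilies free of infinite independent subsequences; this combinatorial point is delicate. The Banach-space route sketched above is cleaner but leans on Rosenthal's $\ell_1$-dichotomy, which is external to the material developed in the excerpt.
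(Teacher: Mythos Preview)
Your overall strategy is correct and parallels the paper's proof closely. The paper invokes a result of Gerlits (every nonmetrizable dyadic compactum contains a copy of $2^{\omega_1}$) together with the fact that $\wrn$ is closed under closed subspaces; you instead use the Engelking--Pe\l czy\'nski theorem to exhibit $2^{\omega_1}$ as a \emph{continuous image} of $L$ and then use closure of $\wrn$ under continuous images. Both routes reduce to the same endpoint, namely $2^{\omega_1}\notin\wrn$, which the paper simply cites from Glasner--Megrelishvili \cite[Remark 10.7]{GlasnerMegrelishvili14}.

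One caution: your Banach-space sketch for $2^{\omega_1}\notin\wrn$ is not valid as written. The mere existence of an $\ell_1$-equivalent sequence in $C(K)$ does not prevent $K$ from being $\wrn$; indeed $C(2^\omega)$ contains $\ell_1$ (it is universal for separable Banach spaces) while $2^\omega$ is metrizable and hence trivially $\wrn$. A correct argument must rule out \emph{every} potential weak$^*$-embedding of $2^{\omega_1}$ into the dual of an $\ell_1$-free space, not just the canonical one via $C(2^{\omega_1})$. This is why the combinatorial route through Theorem~\ref{intro:wrn} is delicate, as you correctly anticipate, and why both you and the paper are best served by citing the literature for this fact.
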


\begin{proof}
	If $K$ is dyadic then, by a result due to Gerlits \cite{Ge76}, every nonmetrizable continuous image $L$  of $K$
	contains a copy of $2^{\omega_1}$, so $L$ is not in $\wrn$ by \cite[Remark 10.7]{GlasnerMegrelishvili14}.
\end{proof}

	As mentioned in the introduction, following \cite{AMP18} we say  that a  Boolean algebras $\fA$ is {\em weakly Radon-Nikod\'{y}m} (denoted $\fA\in\wrnb$)  if its Stone space is in the class $\wrn$.
Theorem \ref{intro:wrn} gives an internal characterizations of such algebras.
It is convenient to rephrase that purely Boolean condition  in the following form
(see \cite[Proposition 3.2]{AMP18} for the proof).

\begin{theorem}\label{ba:1}
For a Boolean algebra $\fB$, $\fB\in\wrnb$ if and only if $\fB$ can be decomposed into countably many parts,
none of which contains an infinite independent sequence.
\end{theorem}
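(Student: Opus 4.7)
My plan is to derive Theorem \ref{ba:1} from the generating-family characterization already recorded as Theorem \ref{intro:wrn}. The \emph{if} direction is immediate: if $\fB = \bigcup_n \fB_n$ with no $\fB_n$ containing an infinite independent sequence, then $\fB$ trivially generates itself with this very decomposition, and Theorem \ref{intro:wrn} yields $\fB \in \wrnb$.

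For the \emph{only if} direction, starting from $\fB \in \wrnb$, Theorem \ref{intro:wrn} supplies a generating family $\cG = \bigcup_n \cG_n$ with no $\cG_n$ containing an infinite independent sequence; my task is to upgrade this to a decomposition of all of $\fB$. Since every element of $\fB$ has the form $t(g_1, \ldots, g_k)$ for some arity $k$, some Boolean term $t$ in $k$ variables, and some elements $g_i \in \cG_{n_i}$, the natural partition is
\[
\fB_{t, \vec{n}} := \{t(g_1, \ldots, g_k) : g_i \in \cG_{n_i}\}, \qquad \vec{n} = (n_1, \ldots, n_k) \in \Nat^k,
\]
indexed by the countably many triples $(k, t, \vec{n})$, with union equal to $\fB$.

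The heart of the argument is then the combinatorial claim that each piece $\fB_{t, \vec{n}}$ contains no infinite independent sequence, which I would reduce to the following lemma: if $\cH_1, \ldots, \cH_k \sub \fB$ each contain no infinite independent sequence and $t$ is a $k$-ary Boolean term, then $\{t(h_1, \ldots, h_k) : h_i \in \cH_i\}$ contains none either. My plan here is to invoke Rosenthal's $\ell_1$ theorem in the following guise: a family of clopen subsets of the Stone space $K = \mathrm{Stone}(\fB)$ contains no infinite independent sequence if and only if every sequence from it admits a pointwise convergent subsequence on $K$, with the limit a $\{0,1\}$-valued Borel function. Given a sequence $a_n = t(h_{1,n}, \ldots, h_{k,n})$ from the target family, I would apply this equivalence $k$ times, extracting nested subsequences so that each coordinate $(h_{i,n})$ converges pointwise on $K$ to some $f_i$; then $a_n \to t(f_1, \ldots, f_k)$ pointwise by the continuity of $t$ as a map on $\{0,1\}^k$, yielding the desired pointwise convergent subsequence of $(a_n)$.

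The main obstacle will be the careful application of Rosenthal's theorem. The pointwise limits $f_i$ are only Borel measurable and in general do not correspond to elements of $\fB$, so one must apply $t$ valuewise (rather than algebraically inside $\fB$) and verify that this pointwise composition of Boolean-term evaluations behaves as expected. The iterated extraction of coordinate-wise convergent subsequences is then routine by a standard diagonal argument, provided the original pairing of coordinates is preserved throughout.
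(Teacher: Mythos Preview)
The paper does not actually prove Theorem~\ref{ba:1}; it merely records it as a rephrasing of Theorem~\ref{intro:wrn} and cites \cite[Proposition~3.2]{AMP18} for both. So there is no in-paper proof to compare against, and your proposal should be judged on its own.

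Your argument is correct. The \emph{if} direction is indeed trivial. For the \emph{only if} direction, the decomposition of $\fB$ by the data $(k,t,\vec n)$ is the natural one, and your key lemma is sound. The equivalence you invoke---that a family of clopens in a compact space contains no infinite independent sequence iff every sequence from it has a pointwise convergent subsequence---is exactly Rosenthal's dichotomy specialised to characteristic functions; compactness of the Stone space is what guarantees that an independent sequence of clopens cannot pointwise converge (the diagonal map onto $2^\omega$ is surjective, so some point witnesses oscillation). Given that, extracting coordinate-wise convergent subsequences and using that a Boolean term $t$ acts continuously on $\{0,1\}^k$ gives the pointwise limit $t(f_1,\dots,f_k)$, and you are done. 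Your caveat that the $f_i$ are only Borel (not elements of $\fB$) is well placed but harmless: you only need pointwise convergence of the $\chi_{a_n}$, not that the limit comes from $\fB$.

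One cosmetic remark: the phrase ``with the limit a $\{0,1\}$-valued Borel function'' is true but not used anywhere in the argument, so you may drop it. Also, the ``diagonal argument'' you mention at the end is unnecessary here since $k$ is fixed and finite; $k$ successive extractions suffice.
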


Recall that, for a class $\cC$ of Boolean algebras, we denote by $\cC^\perp$ the class of Boolean algebras containing no uncountable Boolean subalgebra in $\cC$. Theorem \ref{ba:1} yields the following  characterization of $\wrnb^\perp$.

	\begin{proposition}\label{ba:2}
Given a Boolean algebra $\fB$, $\fB \in \wrnb^\perp$ if and only if every uncountable subset of
$\fB $ contains an infinite independent subset.
	\end{proposition}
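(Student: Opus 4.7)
The plan is to read the statement as a direct translation of Theorem \ref{ba:1} (and its generating-set variant in Theorem \ref{intro:wrn}) into the language of uncountable \emph{subsets}, so the proof reduces to a pigeonhole argument in one direction and an application of the generating-set characterization in the other.

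For the ``if'' direction, I would let $\fA\sub\fB$ be an uncountable Boolean subalgebra and suppose, for contradiction, that $\fA\in\wrnb$. Theorem \ref{ba:1} then provides a decomposition $\fA=\bigcup_{n}\cG_n$ in which no $\cG_n$ contains an infinite independent sequence. Since $\fA$ is uncountable, some $\cG_{n_0}$ is uncountable; but $\cG_{n_0}$ is then an uncountable subset of $\fB$, so by hypothesis it must contain an infinite independent subset, contradicting the choice of the decomposition. Hence no uncountable subalgebra of $\fB$ is in $\wrnb$, i.e.~$\fB\in\wrnb^\perp$.

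For the ``only if'' direction, I would take an uncountable $S\sub\fB$ and let $\fA$ be the Boolean subalgebra of $\fB$ generated by $S$. Since $\fA\supseteq S$ it is uncountable, so $\fB\in\wrnb^\perp$ forces $\fA\notin\wrnb$. Suppose, for contradiction, that $S$ contains no infinite independent subset. Then the trivial splitting $\cG_0=S$, $\cG_n=\emptyset$ for $n\ge 1$, exhibits $S$ as a countable union of families with no infinite independent sequence; since $S$ generates $\fA$, Theorem \ref{intro:wrn} gives $\fA\in\wrnb$, a contradiction. Thus $S$ must contain an infinite independent subset.

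The point at which difficulty could have arisen is in the ``only if'' direction: a priori one must produce the infinite independent sequence inside $S$ itself, not merely inside the richer algebra $\fA$ generated by $S$. The existence of a \emph{generating-set} form of the characterization in Theorem \ref{intro:wrn} bypasses this issue cleanly (one does not need to build independent combinations from Boolean polynomials), so in the end the proof is essentially bookkeeping with the two halves of Theorem \ref{ba:1} / Theorem \ref{intro:wrn}.
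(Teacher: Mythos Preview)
Your proof is correct and follows essentially the same approach as the paper's. In both directions the argument is the one you give: pass to the subalgebra generated by the uncountable set, observe it is nonmetrizable hence not $\wrnb$, and then apply the generating-set characterization (Theorem~\ref{intro:wrn}) to force an infinite independent sequence inside the generating set itself; conversely, decompose a putative uncountable $\wrnb$ subalgebra and use pigeonhole. The only cosmetic difference is that the paper phrases the reverse implication by concluding directly that each $\cG_n$ must be countable (rather than framing it as a contradiction), and it cites Theorem~\ref{ba:1} where you, more precisely, invoke the generating-set form in Theorem~\ref{intro:wrn}.
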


	\begin{proof}
Consider an uncountable set $\Gamma \sub \fB$ where $\fB \in \wrnb^\perp$.
Let $L$ be the Stone space of the Boolean subalgebra $\fA$
		of $\fB$  generated by~$\Gamma$; then $L$ is a continuous image of $\ult(\fB)$.
Since $\Gamma$ is uncountable, $L$ is not
	metrizable and so it is not weakly Radon-Nikod\'ym. In particular,
this and Theorem \ref{ba:1} immediately imply that $\Gamma$ contains an infinite independent sequence.
		
		For the reverse implication suppose that
		 $\fA$ is a subalgebra of $\fB$ such  that $\fA\in\wrnb$. Then $\fA$ is
		generated by some $\cG=\bigcup_n\cG_n$, where no $\cG_n$
		contains an infinite independent sequence. It follows that every $\cG_n$ is countable so $\fA$ is countable as well.
	\end{proof}

	\begin{remark}\label{ba:3}
		We can compare the above characterization of Boolean algebras in $\wrnb^\perp$ with the following:
		$\ult(\fB)\in\scattered^\perp$ if and only if every uncountable {\bf subalgebra} of $\fB$ contains an infinite
		independent sequence. This follows from the fact that, given $\fA\sub\fB$,
		$\ult(\fA)$ is not scattered if and only if $\ult(\fA)$ maps continuously onto the Cantor set $2^\omega$.
	\end{remark}
	
\section{Corson compacta and $\wrnb$}
\label{SectionCorsonandWRNB}

As we have seen, all dyadic compacta are in the class  $\wrn^\perp$.
To find other examples of nonmetrizable spaces from $\wrn^\perp$
we need some auxiliary results.

Consider any subspace $X$ of some cube $2^\kappa$.
We say that some $C\sub X$ is determined by (coordinates in) $J\sub \kappa$ if
\[ \pi^{-1}_J \pi_J(C)\cap X=C.\]
In other words: if $x\in C$, $y\in X$, $x|J=y|J$ then $y\in C$. Notice that the ambient space $X$ is relevant in this definition, even if we do not mention it for economy of language.
Recall that if $K\sub 2^\kappa$ is compact then, by the Stone-Weierstrass theorem, every clopen subset of $K$ is
determined by a finite number of coordinates.

Let us first fix $n=\{ 0,1\ldots, n-1\}$ and some $X\sub 2^n$; for any $s\sub n$ and $\vf :s\to 2$ write
\[ C(\vf)=\{x\in X: x|s=\vf \}.\]

\begin{lemma}\label{clopen:1}
Suppose that $J\sub n$ and $C\sub X$ is a subset that is {\bf not} determined by coordinates in $J$ (this, in particular,
implies that $C$ is a proper subset of $X$).
Then there are $s\sub n$,  $k\in s\sm J$ and $\vf,\psi:s\to 2$ such that

\begin{enumerate}[(i)]
\item $\vf|(s\sm\{k\})=\psi|(s\sm\{k\})$;
\item $\emptyset\neq C(\vf)\sub C$;
\item $\emptyset\neq C(\psi)\sub X\sm C$.
\end{enumerate}
\end{lemma}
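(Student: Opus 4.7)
The plan is as follows. Since $C$ is not determined by $J$, we may fix $\alpha\colon J\to 2$ such that the fibre $E_\alpha:=\{z\in X:z|_J=\alpha\}$ meets both $C$ and $X\sm C$. For $J\sub s\sub n$ and $\varphi\colon s\to 2$ extending $\alpha$, call $(s,\varphi)$ \emph{mixed} if $C(\varphi)$ meets both $C$ and $X\sm C$, and \emph{pure} if $C(\varphi)$ is nonempty and contained entirely in one of $C$, $X\sm C$. I aim to produce a mixed $(s_0,\varphi_0)$ together with some $k\in n\sm s_0$ whose two one-step refinements $\varphi_0\cup\{k\mapsto 0\}$ and $\varphi_0\cup\{k\mapsto 1\}$ are both pure; since the two corresponding cylinders partition the mixed $C(\varphi_0)$, they are then automatically nonempty and of opposite types, so setting $s:=s_0\cup\{k\}$ and letting $\varphi$, $\psi$ be these two refinements yields (i)--(iii), and $k\in s\sm J$ because $k\in n\sm s_0\sub n\sm J$.

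The construction is a finite descent rooted at $(J,\alpha)$, which is mixed by the choice of $\alpha$. At the current mixed node $(s,\varphi)$ with $s\subsetneq n$, I first check whether some $k\in n\sm s$ produces two pure refinements; if so, I halt. Otherwise I pick any $k\in n\sm s$ and descend into a mixed refinement at $k$. Such a mixed refinement must exist: the two refinements partition the mixed $C(\varphi)$, and if neither were mixed an easy case check shows they would both be nonempty pure of opposite types, which is exactly the halting condition we have just ruled out. Since $|s|$ strictly increases at each step, the descent terminates after at most $n-|J|$ steps.

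The main point is that termination must occur in the halting condition: a node with $s=n$ has cylinder of size at most $1$ in $X$, so it cannot be mixed, and hence the descent cannot simply ``run out'' of coordinates. Thus the algorithm halts in the halting condition, delivering the desired triple $(s_0,\varphi_0,k)$, and (i)--(iii) follow directly from its construction.

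A short remark on the method. The tempting alternative of fixing witnessing points $x\in C$, $y\in X\sm C$ with $x|_J=y|_J$ and flipping coordinates of $D=\{i<n:x(i)\neq y(i)\}$ one at a time to locate a ``crossing flip'' fails because the intermediate sequences need not belong to $X$ (e.g.\ $X=\{(0,0),(1,1)\}\sub 2^2$, $C=\{(0,0)\}$, $J=\emptyset$, where no intermediate flip lies in $X$ yet the lemma still holds with $s=\{0\}$). Working at the level of cylinders, which automatically stay nonempty along a mixed descent, sidesteps this obstruction; this is the step I expect to be the main conceptual hurdle, and it is exactly what the tree descent provides.
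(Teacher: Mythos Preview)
Your proof is correct, but the paper takes a different and shorter route. Rather than descending through a tree of cylinders, the paper fixes a witnessing pair $x\in C$, $y\in X\setminus C$ with $x|_J=y|_J$ for which $\Delta(x,y)=\{i<n:x(i)\neq y(i)\}$ has \emph{minimal} size among all such pairs, then picks any $k\in\Delta(x,y)$ and sets $s=(n\setminus\Delta(x,y))\cup\{k\}$, $\varphi=x|_s$, $\psi=y|_s$. Minimality of $|\Delta(x,y)|$ directly yields the inclusions (ii) and (iii): any $z\in C(\varphi)\setminus C$ would give a witnessing pair $(x,z)$ with $\Delta(x,z)\subsetneq\Delta(x,y)$, and similarly for (iii).

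Your closing remark is therefore slightly off target. The ``crossing flip along a path from $x$ to $y$'' idea that you reject does indeed fail for the reason you give, but the paper's argument is not that: it minimises over \emph{all} witnessing pairs rather than walking along a single one, and so never needs intermediate points to lie in $X$. Your cylinder descent is a legitimate alternative that avoids the same obstruction by working with nonempty cylinders rather than individual points; it is somewhat longer but has the mild advantage of producing an $s$ that explicitly contains $J$ and can be as small as $|J|+1$, whereas the paper's $s$ is typically large (its complement in $n$ has size $|\Delta(x,y)|-1$).
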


\begin{proof}
Since $C$ is not determined by $J$, there are $x\in C$ and $y\in X\sm C$ such that $x|J=y|J$.
Choose such a pair $x,y$  that the set $\Delta(x,y)=\{i<n: x(i)\neq y(i)\}$ has the minimal possible size.
As $x,y$ are different, the set $\Delta(x,y)$ is not empty; choose any $k\in \Delta(x,y)$; note that $k\notin  J$.
Put $s=(n\sm \Delta(x,y))\cup\{k\}$ and define $\vf,\psi:s\to 2$ so that $\phi|s=x|s$ and $\psi|s=y|s$.
Then $(i)$ is granted and we have $C(\vf), C(\psi)\neq\emptyset$, so it remains to verify the inclusions
in $(ii)$ and $(iii)$.

Suppose that $z\in C(\vf)$ but $z\in X\sm C$. Then $\Delta(x,z)\sub \Delta(x,y)\sm \{k\}$, a contradiction with
the minimality of $\Delta(x,y)$.

We verify $(iii)$ in a similar manner: Suppose that $z\in C(\psi)$ but $z\in C$.
Then again $\Delta(z,y)$ is a proper subset of $\Delta(x,y)$.
\end{proof}

Consider now a compact space $K\sub 2^\kappa$ for some $\kappa$. As before we write
\[ C(\vf)=\{x\in K: x|s=\vf\},\]
for any finite $s\sub\kappa$ and $\vf:s\to 2$.

\begin{corollary}\label{clopen:2}
Suppose that $\cC$ is an uncountable family of clopen subsets of $K$. Then there
are families $\{C_\alpha:\alpha<\omega_1\}\sub \cC$,
$\{s_\alpha:\alpha<\omega_1\}\sub [\kappa]^{<\omega}$,
a one-to-one function $\xi:\omega_1\to\kappa$ and $\vf_\alpha,\psi_\alpha : s_\alpha \to 2$ such that
for every $\alpha<\omega_1$

\begin{enumerate}[(i)]
\item $\vf_\alpha |(s_\alpha\sm\{\xi(\alpha)\})=\psi_\alpha |(s_\alpha \sm\{\xi(\alpha)\})$;
\item $\emptyset\neq C(\vf_\alpha)\sub C_\alpha$;
\item $\emptyset\neq C(\psi_\alpha)\sub K\sm C_\alpha$.
\end{enumerate}
\end{corollary}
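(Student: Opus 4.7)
The plan is to reduce to Lemma \ref{clopen:1} by a $\Delta$-system argument. Since $K\sub 2^\kappa$ is compact, the Stone--Weierstrass theorem guarantees that each clopen $C\in\cC$ is determined by a finite set of coordinates; I pick such an $F(C)\sub\kappa$ for every $C\in\cC$. By passing to an uncountable subfamily I may assume $|F(C)|=n$ for a fixed $n$.

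Applying the $\Delta$-system lemma to $\{F(C):C\in\cC\}$, I obtain an uncountable $\cC_1\sub\cC$ and a finite root $R\sub\kappa$ such that $F(C)\cap F(C')=R$ for distinct $C,C'\in\cC_1$. Those $C\in\cC_1$ which happen to be determined by the coordinates in $R$ form a finite subfamily, because any such set is a union of pieces $\pi_R^{-1}(\vf)\cap K$ with $\vf\in\pi_R(K)$ and $\pi_R(K)$ is finite. Discarding them yields an uncountable $\cC_2\sub\cC_1$ such that no $C\in\cC_2$ is determined by $R$; in particular $R\subsetneq F(C)$ for every $C\in\cC_2$.

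For each $C\in\cC_2$ consider the finite space $X_C:=\pi_{F(C)}(K)\sub 2^{F(C)}$ together with the image $C^*:=\pi_{F(C)}(C)\sub X_C$. Since $C$ is not determined in $K$ by $R\sub F(C)$, the set $C^*$ is not determined in $X_C$ by $R$ either. Invoking Lemma \ref{clopen:1} (after relabelling $F(C)$ as $n$), I obtain $s(C)\sub F(C)$, a coordinate $k(C)\in s(C)\sm R$, and functions $\vf_C,\psi_C:s(C)\to 2$ fulfilling (i)--(iii) inside $X_C$. These conditions lift back to $K$ verbatim: any point of $K$ whose restriction to $s(C)$ equals $\vf_C$ projects into $C^*$ and hence lies in $C$, and analogously for $\psi_C$ and $K\sm C$.

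Finally, I enumerate $\cC_2=\{C_\alpha:\alpha<\omega_1\}$ and set $s_\alpha:=s(C_\alpha)$, $\xi(\alpha):=k(C_\alpha)$, $\vf_\alpha:=\vf_{C_\alpha}$, $\psi_\alpha:=\psi_{C_\alpha}$. The map $\xi$ is one-to-one because $\xi(\alpha)\in F(C_\alpha)\sm R$ and the sets $F(C_\alpha)\sm R$ (for $\alpha<\omega_1$) are pairwise disjoint by the $\Delta$-system property. I do not foresee a serious obstacle; the only points worth double-checking are that passing to the finite projection $\pi_{F(C)}(K)$ legitimately allows Lemma \ref{clopen:1} to be applied, and that the $\Delta$-system delivers the injectivity of $\xi$ at no extra cost.
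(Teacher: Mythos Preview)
Your argument is correct. The paper takes a different route: rather than a $\Delta$-system refinement, it runs a transfinite induction of length $\omega_1$. At stage $\alpha$ one lets $J=\{\xi(\beta):\beta<\alpha\}$; since only countably many clopens of $K$ are determined by a countable set of coordinates, one may choose $C_\alpha\in\cC$ not determined by $J$ and then apply Lemma~\ref{clopen:1} (after projecting to the finite support of $C_\alpha$, exactly as you do) to obtain $s_\alpha$, $\vf_\alpha$, $\psi_\alpha$ and a distinguished coordinate $k\in s_\alpha\setminus J$, which becomes $\xi(\alpha)$. Injectivity of $\xi$ is then immediate from $k\notin J$.

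Your $\Delta$-system approach has the pleasant side effect of producing more structure than the statement demands: the supports $F(C_\alpha)$, and hence the $s_\alpha$, already form a $\Delta$-system with root $R$, and each $\xi(\alpha)$ avoids $R$. This is exactly the additional refinement that the paper has to perform separately later, in the proof of Theorem~\ref{between:2}. The inductive argument, by contrast, is marginally shorter and does not rely on the (easy) observation that a finite coordinate set determines only finitely many clopens. One small point to tighten: for the sets $F(C_\alpha)\setminus R$ to be pairwise disjoint you need the $F(C_\alpha)$ themselves to be pairwise distinct, which is not automatic from the $\Delta$-system lemma applied to the indexed family $\{F(C):C\in\cC\}$; since each finite $F$ supports only finitely many clopens, you can simply thin out to an uncountable subfamily with distinct supports before invoking the $\Delta$-system lemma.
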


\begin{proof}
This follows from the lemma above by a simple induction: note that for every countable
$J\sub\kappa$ there are only countably many clopens that are determined by coordinates in $J$.
\end{proof}

We shall now consider any $ccc$ compact space $K\sub 2^\kappa$ and its `adequate closure' $\widetilde{K}$,
where
%	\[ \widetilde{K} = \left\{ x\in 2^\kappa :   \forall s\in [\kappa]^{<\omega} \
%x|s \equiv 1 \Rightarrow \exists y\in K : y|s \equiv 1 \right\};\]
\[ \widetilde{K} = \left\{ x\in 2^\kappa :   \mbox{ there is } y\in K \mbox{ such that }  x \leq y \right\}.\]
%Here $x\leq y$ means $x(\alpha)\leq y(\alpha)$ for every $\alpha<\kappa$.
%the notion of an adequate family was mentioned in section \ref{sectionccc}.
In what follows, we write $f \leq g$ for functions defined on possibly different subsets of $\kappa$ if $f(\alpha) \leq g(\alpha)$ whenever $\alpha$ belongs simultaneously to the domains of $f$ and $g$. Recall that we say that a zero-dimensional compact space belongs to  $\wrnb^\perp$ if its clopen algebra belongs to $\wrnb^\perp$. As we shall see in the next section, compact spaces in $\wrnb^\perp$ might not belong to $\wrn^\perp$.

\begin{theorem}\label{cs:1}
	If a compact space $K \subset 2^\kappa$ is $ccc$
then its adequate closure $\widetilde{K}$ belongs to $\wrnb^\perp$.
\end{theorem}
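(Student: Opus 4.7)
\textbf{Reduction via Proposition \ref{ba:2}.} By Proposition~\ref{ba:2} it is enough to show that every uncountable family $\{C_\alpha : \alpha < \omega_1\}$ of clopens of $\widetilde K$ contains an infinite independent subsequence. Fix such a family and apply Corollary~\ref{clopen:2} to $\widetilde K \sub 2^\kappa$ to obtain, after passing to an uncountable subfamily, finite sets $s_\alpha \sub \kappa$, an injection $\xi:\omega_1 \to \kappa$, and functions $\vf_\alpha,\psi_\alpha:s_\alpha\to 2$ that agree off $\xi(\alpha)$ and satisfy $\emptyset\neq\{x\in\widetilde K : x|s_\alpha = \vf_\alpha\} \sub C_\alpha$ and $\emptyset\neq\{x\in\widetilde K : x|s_\alpha = \psi_\alpha\} \sub \widetilde K \sm C_\alpha$. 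A $\Delta$-system refinement of $\{s_\alpha\}$ produces a root $\Delta$ and pairwise disjoint tails $s_\alpha\sm\Delta$. Since $\xi$ is injective and $\Delta$ finite, I may assume $\xi(\alpha)\notin\Delta$ for every $\alpha$, and pigeonhole on $\vf_\alpha|\Delta$ and on $\vf_\alpha(\xi(\alpha))$ lets me assume further that $\vf_\alpha|\Delta = \psi_\alpha|\Delta$ is a fixed $\vf_0:\Delta\to 2$, and (swapping $\vf_\alpha$ with $\psi_\alpha$ and $C_\alpha$ with its complement if necessary) that $\vf_\alpha(\xi(\alpha))=1$, $\psi_\alpha(\xi(\alpha))=0$.

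\textbf{Reduction to a common extension in $K$.} Set $T_0 = \vf_0^{-1}(1)\cap\Delta$ and $T_\alpha = \vf_\alpha^{-1}(1)\cap(s_\alpha\sm\Delta)$; the $T_\alpha$ are pairwise disjoint finite subsets of $\kappa$, each containing $\xi(\alpha)$, and $\vf_\alpha^{-1}(1) = T_0\cup T_\alpha$. Let $L = \{y\in K : y|T_0\equiv 1\}$, a clopen and hence ccc subspace of $K$, and $V_\alpha = \{y\in L : y|T_\alpha\equiv 1\}$. Each $V_\alpha$ is nonempty, because $\{x\in\widetilde K : x|s_\alpha=\vf_\alpha\}\neq\emptyset$ produces by definition of $\widetilde K$ some $y\in K$ with $y\ge \chi_{\vf_\alpha^{-1}(1)} = \chi_{T_0\cup T_\alpha}$. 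Suppose one can find an infinite $I\sub\omega_1$ and $y^*\in L$ with $y^*\in\bigcap_{\alpha\in I}V_\alpha$. Then for every finite $F\sub I$ and every $\eta:F\to 2$, define $x\in 2^\kappa$ by $x|\Delta = \vf_0$, $x|(s_\alpha\sm\Delta) = \vf_\alpha|(s_\alpha\sm\Delta)$ when $\alpha\in F$ and $\eta(\alpha)=1$, $x|(s_\alpha\sm\Delta) = \psi_\alpha|(s_\alpha\sm\Delta)$ when $\alpha\in F$ and $\eta(\alpha)=0$, and $x\equiv 0$ outside $\Delta\cup\bigcup_{\alpha\in F}s_\alpha$. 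A direct check shows that $x\leq y^*$ coordinatewise, so $x\in\widetilde K$, while $x|s_\alpha\in\{\vf_\alpha,\psi_\alpha\}$ places $x$ in $C_\alpha$ or $\widetilde K\sm C_\alpha$ as dictated by $\eta$; this witnesses independence of $\{C_\alpha:\alpha\in I\}$.

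\textbf{The main obstacle: the key lemma via Eberlein and Rosenthal.} The step I expect to be the heart of the proof is producing this infinite subfamily with a common point. The claim is: for any ccc compact $L\sub 2^\kappa$ and any pairwise disjoint finite sets $\{T_\alpha\}_{\alpha<\omega_1}$ in $\kappa$ with each $V_\alpha=\{y\in L:y|T_\alpha\equiv 1\}$ nonempty, some infinite subfamily of $\{V_\alpha\}$ has a common point. I would prove this by contradiction: the negation is precisely that $\{V_\alpha\}$ is point-finite in $L$, so the continuous diagonal map $F:L\to 2^{\omega_1}$, $F(y)(\alpha)=\chi_{V_\alpha}(y)$, satisfies $F(L)\sub\Sigma_0 := \{z\in 2^{\omega_1}:|\mathrm{supp}(z)|<\omega\}$. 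Since the product topology and the $c_0(\omega_1)$-weak topology agree on bounded subsets of $c_0(\omega_1)$, $F(L)$ is weakly compact in $c_0(\omega_1)$, hence an Eberlein compactum, and it is also ccc as a continuous image of $L$. The identity $\eberlein^\perp=\ccc$ recorded in Proposition~\ref{occ:2} then forces $F(L)$ to be metrizable, so (being zero-dimensional) $F(L)$ has a countable clopen algebra. Consequently, uncountably many of the nonempty clopens $V'_\alpha := \{z\in F(L):z(\alpha)=1\}$ coincide: for some uncountable $A'\sub\omega_1$ all the $V'_\alpha$ with $\alpha\in A'$ equal a single nonempty $V^*$. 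But any $z\in V^*$ then satisfies $z(\alpha)=1$ for every $\alpha\in A'$, which forces $|\mathrm{supp}(z)|\ge|A'|>\omega$, contradicting $z\in\Sigma_0$. This contradiction produces the required $y^*$ and completes the proof.
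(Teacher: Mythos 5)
Your proof is correct, and its skeleton coincides with the paper's: reduce via Proposition~\ref{ba:2} and Corollary~\ref{clopen:2}, use the $ccc$ property of $K$ to produce one point $y^*\in K$ dominating $\max\{\vf_{\alpha_n},\psi_{\alpha_n}\}$ for infinitely many indices, and then read off independence by selecting $0$--$1$ patterns below $y^*$. (Your explicit $\Delta$-system refinement is in fact more careful than the paper, whose proof leaves the well-definedness of the witnessing points implicit.) The one genuine divergence is how the infinite subfamily with a common point is obtained. The paper simply invokes the elementary fact, recalled in its introduction, that in a $ccc$ space every uncountable family of nonempty open sets contains an infinite centered subfamily; since the sets in question (your $V_\alpha$, the paper's $C(g_\alpha)\cap K$) are clopen, compactness then yields a common point. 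You instead prove this special case by contradiction through the diagonal map into $2^{\omega_1}$: point-finiteness forces the image into the finitely supported vectors, hence into an Eberlein compactum, and Rosenthal's theorem $\eberlein^\perp=\ccc$ (Proposition~\ref{occ:2}) makes that $ccc$ image metrizable, which is absurd. This is valid and non-circular, and it is a pleasant illustration of the orthogonality formalism the paper sets up, but it is a much heavier hammer than necessary: the centered-subfamily extraction is a purely combinatorial ZFC fact (maximal disjoint family plus pigeonhole), whereas your route passes through a nontrivial functional-analytic theorem. Either way, the argument establishes the statement.
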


\begin{proof}
In view of Proposition \ref{ba:2}, we are going to show that every uncountable subfamily of
$\clop(\widetilde{K})$ contains an infinite independent sequence.

Using Corollary \ref{clopen:2} it is enough to consider
$\{C_\alpha:\alpha<\omega_1\}\sub \clop(\widetilde{K})$ together with
$\{s_\alpha:\alpha<\omega_1\}\sub [\kappa]^{<\omega}$,
a one-to-one function $\xi:\omega_1\to\kappa$ and $\vf_\alpha,\psi_\alpha:s_\alpha  \to 2$ such that
for every $\alpha<\omega_1$

\begin{enumerate}[(i)]
\item $\vf_\alpha |(s_\alpha\sm\{\xi(\alpha)\})=\psi_\alpha |(s_\alpha \sm\{\xi(\alpha)\})$;
\item $\emptyset\neq C(\vf_\alpha)\sub C_\alpha$;
\item $\emptyset\neq C(\psi_\alpha)\sub \widetilde{K}\sm C_\alpha$.
\end{enumerate}

Now we show that there are infinitely many independent pairs
of the form $(C(\vf_\alpha), C(\psi_\alpha))$. Set $A_\alpha=C(\vf_\alpha)$ whenever
$\vf_\alpha (\xi(\alpha))=1$ and $A_\alpha=C(\psi_\alpha)$ otherwise, that is
when $\psi_\alpha (\xi(\alpha))=1$.
By definition of $\widetilde{K}$, for every $x\in A_\alpha$ there is $y_x \in K$ such that $x\leq y_x$. Thus, we can take $\{s_\alpha':\alpha<\omega_1\}\sub [\kappa]^{<\omega}$ with $s_\alpha \subseteq s_\alpha'$ and $g_\alpha : s_\alpha' \rightarrow 2$ such that $g_\alpha \geq \max\{\psi_\alpha, \vf_\alpha\}$, and $C(g_\alpha) \cap K \neq \emptyset$ for every $\alpha<\omega_1$.

Now we apply $ccc$ to the family $\{C(g_\alpha) \cap K:\alpha<\omega_1\}$
of clopens to get $x\in K$ belonging to $C(g_{\alpha_n})$ for  a sequence of distinct $\alpha_n$'s. It follows from the inequality $x \geq \max\{\psi_{\alpha_n} , \vf_{\alpha_n}\}$ for every $n\in \N$ and the definition of $\widetilde{K}$ that the pairs $(C(\vf_{\alpha_n}), C(\psi_{\alpha_n}))$
are independent.
\end{proof}

\begin{corollary}\label{cs:2}
	If MA$_{\omega_1}$ does not hold
	then  there is a nonmetrizable zero-dimensional compact space in $\corson \cap \wrnb^\perp$.
\end{corollary}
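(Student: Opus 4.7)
The plan is to combine two ingredients already established in the paper: the fact (Remark~\ref{occ:6.5}) that the failure of MA$_{\omega_1}$ produces zero-dimensional nonmetrizable $ccc$ Corson compacta, and Theorem~\ref{cs:1}, which lifts any $ccc$ compact $K\sub 2^\kappa$ to an adequate closure $\widetilde K\in\wrnb^\perp$. So the obvious candidate is the adequate closure of a witness to the failure of MA$_{\omega_1}$, and essentially the whole task is to check that adequate closure preserves the remaining properties (being Corson, nonmetrizable, zero-dimensional and compact).

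First, I would invoke Remark~\ref{occ:6.5} to fix a nonmetrizable zero-dimensional $ccc$ Corson compactum $K$; concretely, by the Corson property I may assume that $K$ is embedded in the $\Sigma$-product $\Sigma(2^\kappa)=\{x\in 2^\kappa: |\{\alpha:x(\alpha)=1\}|\le\omega\}$ for a suitable $\kappa$. Then I form the adequate closure
\[ \widetilde K=\{x\in 2^\kappa:\,\exists y\in K\ x\le y\}. \]
The first small verification is that $\widetilde K$ is compact: $\widetilde K$ is the image of the compact space $2^\kappa\times K$ under the continuous map $(x,y)\mapsto x\wedge y$, since any $x\le y$ with $y\in K$ satisfies $x=x\wedge y$. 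Being a subset of $2^\kappa$, it is automatically zero-dimensional, and it contains $K$ as a closed subspace (take $y=x$ in the definition), so $\widetilde K$ is nonmetrizable because metrizability would descend to $K$.

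Next I would check that $\widetilde K$ is Corson. For any $x\in\widetilde K$, pick $y\in K$ with $x\le y$; then the support of $x$ is contained in the support of $y$, which is countable because $K\sub\Sigma(2^\kappa)$. Hence $\widetilde K\sub\Sigma(2^\kappa)$ and $\widetilde K\in\corson$.

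Finally, Theorem~\ref{cs:1} applies to our $ccc$ compactum $K\sub 2^\kappa$ and yields $\widetilde K\in\wrnb^\perp$. Combining these observations, $\widetilde K$ is a nonmetrizable zero-dimensional compactum in $\corson\cap\wrnb^\perp$, as required. The only place where one should be a bit careful is the verification that $\widetilde K$ is closed (or equivalently, exhibiting it as a continuous image of a compact product) and the fact that $K$ embeds into $\widetilde K$ as a closed subspace to transfer nonmetrizability; everything else is either immediate from the definitions or handed to us by the two cited results.
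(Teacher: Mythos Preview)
Your proof is correct and follows exactly the same approach as the paper: take a zero-dimensional nonmetrizable $ccc$ Corson compactum $K$ furnished by Remark~\ref{occ:6.5}, apply Theorem~\ref{cs:1} to obtain $\widetilde K\in\wrnb^\perp$, and observe that $\widetilde K$ remains Corson. The paper's proof is two lines and leaves the routine verifications (compactness, zero-dimensionality, nonmetrizability, Corson) implicit, whereas you have spelled them out carefully.
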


\begin{proof}
	We apply Theorem \ref{cs:1} to a zerodimensional nonmetrizable $ccc$  Corson compactum $K$ (see Remark \ref{occ:6.5}).
	Notice that $\widetilde{K}$ is also Corson.
\end{proof}

A Boolean algebra $\fB$ has a precaliber $(\kappa,\lambda)$ if every family in $\fB$ of size
$\kappa$ contains a centered
subfamily of size $\lambda$.
Mimicking this definition (it may refer either to
Boolean algebras or topological spaces) we can form the following.
Say that a Boolean algebra $\fB$ has an independence-precaliber $(\kappa,\lambda)$ if
every subfamily of $\fB$ of size $\kappa$ contains an independent subfamily of size $\lambda$.
Such a  notion was already considered in the context of measure algebras, see  \cite{DP04}
 and also \cite{FP04}.
 With  this terminology, Proposition \ref{ba:2} says that $\fB\in\wrnb^\perp$ if and only if
$\fB$ has an independence-precaliber $(\omega_1,\omega)$. Arguing as in Corollary \ref{cs:2} we conclude the following.

\begin{corollary}\label{cs:3}
	Suppose that MA$_{\omega_1}$ does not hold.
	Then there is an uncountable  Boolean algebra  $\fB$ such that
	
	\begin{enumerate}[(i)]
		\item $\fB$ has an independence-precaliber $(\omega_1,\omega)$,  i.e.\ $\fB\in\wrnb^\perp$;
		\item $\fB$ does not have a precaliber $(\omega_1,\omega_1)$; in fact, $\fB$ 
 contains no uncountable independent family.
	\end{enumerate}
\end{corollary}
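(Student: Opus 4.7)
The plan is to recycle the construction of Corollary \ref{cs:2}. Under $\neg$MA$_{\omega_1}$, Remark \ref{occ:6.5} supplies a zero-dimensional nonmetrizable $ccc$ Corson compactum $K\sub 2^\kappa$; form its adequate closure $\widetilde{K}\sub 2^\kappa$ (which is again Corson), and set $\fB=\clop(\widetilde{K})$. Since $\widetilde{K}$ is zero-dimensional, $\ult(\fB)$ is homeomorphic to $\widetilde{K}$, and $\fB$ is uncountable because $\widetilde{K}$ is nonmetrizable. Condition (i) is then immediate from Theorem \ref{cs:1} combined with Proposition \ref{ba:2}: Theorem \ref{cs:1} gives $\widetilde{K}\in\wrnb^\perp$, which is exactly the statement that $\fB$ has independence-precaliber $(\omega_1,\omega)$.

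Condition (ii) splits into two assertions. To see the failure of precaliber $(\omega_1,\omega_1)$, I would appeal to Proposition \ref{occ:6}: as $\widetilde{K}$ is a nonmetrizable Corson continuous image of itself, $\widetilde{K}$ cannot have caliber $\omega_1$, so there exists an uncountable family $\{W_\alpha:\alpha<\omega_1\}$ of nonempty open subsets of $\widetilde{K}$ with no uncountable subfamily sharing a common point. By zero-dimensionality, choose nonempty clopens $A_\alpha\sub W_\alpha$; any uncountable centered subfamily of clopens has a common point by compactness, so $\{A_\alpha:\alpha<\omega_1\}$ is an uncountable subset of $\fB$ with no uncountable centered subfamily, witnessing the failure of precaliber $(\omega_1,\omega_1)$.

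For the stronger nonexistence of an uncountable independent family, suppose toward a contradiction that $\{A_\alpha:\alpha<\omega_1\}\sub\fB$ were independent. The continuous map $f:\widetilde{K}\to 2^{\omega_1}$ with $f(x)(\alpha)=1_{A_\alpha}(x)$ has dense image by independence and is therefore surjective by compactness. Now $\widetilde{K}$ is Corson, so it inherits countable tightness from $\Sigma(2^\kappa)$; countable tightness is preserved by continuous surjections between compact Hausdorff spaces (using that such maps are closed, one transfers countable approximations of points in the preimage to the image). Hence $2^{\omega_1}$ would have countable tightness, contradicting the fact that $\mathbf{1}\in 2^{\omega_1}$ lies in the closure of the finitely supported functions but not in the closure of any countable subset of them.

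The main obstacle is the independence part of (ii). The precaliber failure is essentially automatic from the Corson nonmetrizability of $\widetilde{K}$, but ruling out \emph{every} uncountable independent family in $\fB$ requires the observation that, via Stone duality, such a family would force $\widetilde{K}$ to surject continuously onto the cube $2^{\omega_1}$, which is incompatible with the Corson (equivalently, countably tight) character of $\widetilde{K}$.
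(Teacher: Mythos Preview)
Your proposal is correct and follows the same construction as the paper: the algebra is $\fB=\clop(\widetilde{K})$ for the Corson space $\widetilde{K}$ of Corollary~\ref{cs:2}, with (i) coming from Theorem~\ref{cs:1} and Proposition~\ref{ba:2}. The paper's own proof is just the one-line pointer ``arguing as in Corollary~\ref{cs:2}''; your treatment of (ii) via countable tightness of Corson compacta (to rule out a surjection onto $2^{\omega_1}$) is a perfectly good way of making explicit what the paper leaves to the reader.
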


\begin{remark}
We enclose two comments on Theorem \ref{cs:1}.

\begin{enumerate}
\item From the purely  algebraic point of view,
Theorem \ref{cs:1}  says that if $\{a_i : i\in I\}$ is a family of elements
in a $ccc$ Boolean algebra, and $\{e_i : i\in I\}$ are independent,
then the algebra generated by $\{a_i\otimes e_i : i\in I \}$ in the free product
belongs to $\wrnb^\perp$.
\item The proof of Theorem  \ref{cs:1} shows that any chain condition on $K$ gives the analogous independence condition on $\tilde{K}$. For example, if $K$ has a  precaliber $\omega_1$ (that is, $K\in \corson^\perp$), then $\clop(\widetilde{K})$
has an independence-precaliber $(\omega_1,\omega_1)$.
\end{enumerate}
\end{remark}

\section{Between $\wrnb$ and $\wrn$} \label{between}

 The main result from the previous section leads us to the following question.

\begin{problem}\label{between:1}
Is there a (consistent) example of a nonmetrizable Corson compact space in $\wrn^\perp$?
\end{problem}

In particular, we do not know if the space $\widetilde{K}$ discussed in Theorem \ref{cs:1} is orthogonal
to all (not necessarily zero-dimensional) weakly Radon-Nikod\'ym compacta.

In connection to Problem \ref{between:1} we shall now present another construction
showing that a compact space in $\wrnb^\perp$ might not belong to $\wrn^\perp$.

\begin{theorem}\label{between:2}
Let $L$ be a $ccc$ compact and convex subspace of $\er^\kappa$ (for some $\kappa)$.
Then $L$ is a continuous image of a space $K$ belonging to $\wrnb ^\perp$.
\end{theorem}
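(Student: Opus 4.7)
Plan:

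The plan is to realize $L$ as a continuous image of the Cantor cube $2^\kappa$, which lies in $\wrnb^\perp$.

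First I would translate and rescale each coordinate of $\er^\kappa$ so that $L\subseteq[0,1]^\kappa$; convexity is preserved under affine transformations. The coordinatewise binary-expansion map yields a continuous surjection $\pi\colon 2^{\kappa\times\omega}\twoheadrightarrow[0,1]^\kappa$, and since $|\kappa\times\omega|=\kappa$ for infinite $\kappa$, this exhibits $[0,1]^\kappa$ as a continuous image of the Cantor cube $2^\kappa$.

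Next I would invoke the classical fact that every compact convex subset of a Hausdorff locally convex topological vector space is an absolute retract (in the Dugundji--Borsuk--Hu tradition), so that the closed inclusion $L\hookrightarrow[0,1]^\kappa$ admits a continuous retraction $r\colon[0,1]^\kappa\to L$. Composition gives the desired continuous surjection $r\circ\pi\colon 2^\kappa\twoheadrightarrow L$.

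Finally, by Proposition~\ref{dyadic} the dyadic compactum $2^\kappa$ lies in $\wrn^\perp$. Since $2^\kappa$ is zero-dimensional, any uncountable WRN subalgebra of $\clop(2^\kappa)$ would have Stone space a nonmetrizable WRN continuous image of $2^\kappa$, contradicting $\wrn^\perp$. Hence $2^\kappa\in\wrnb^\perp$, and we may take $K=2^\kappa$.

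The main obstacle is the AR property in the second step: for $L$ of product form $[0,1]^\kappa$ it follows immediately by coordinatewise Tietze extension, but for a general compact convex $L\subseteq\er^\kappa$ one needs the non-metric version of the Dugundji extension theorem for compact convex subsets of LCS. An alternative route that makes explicit use of the ccc hypothesis, more in the spirit of Theorem~\ref{cs:1}, would be to let $K_0=\overline{\Phi(L)}\subseteq 2^{\kappa\times\mathbb{Q}}$ with $\Phi(x)(\alpha,q)=\chi_{\{q\le x(\alpha)\}}$, verify $K_0$ is ccc by using convexity of $L$ to perturb ``semi-open boxes'' in $L$ into their open interiors, apply Theorem~\ref{cs:1} to obtain $\widetilde{K_0}\in\wrnb^\perp$, and then extend the natural continuous surjection $K_0\to L$ to $\widetilde{K_0}\to L$ using convexity of $L$.
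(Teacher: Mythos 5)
Your main route breaks down at the retraction step, and the failure is not a technicality. Dugundji's extension theorem makes compact convex subsets of locally convex spaces absolute extensors for \emph{metrizable} domains; it does not apply to the inclusion $L\hookrightarrow[0,1]^\kappa$ for uncountable $\kappa$, and in general no retraction $r\colon[0,1]^\kappa\to L$ exists. Indeed, if $L$ were a retract of $[0,1]^\kappa$ it would be a continuous image of $2^\kappa$, i.e.\ dyadic, and then by Gerlits' theorem (exactly as used in Proposition~\ref{dyadic}) every nonmetrizable such $L$ would contain a copy of $2^{\omega_1}$ and hence could not be WRN. But the theorem is applied in the paper precisely to $L=P(L_0)$ with $L_0$ the split interval --- a separable (hence ccc), convex, nonmetrizable, WRN compact set --- in order to deduce Corollary~\ref{between:3}, which produces a space in $\wrnb^\perp\setminus\wrn^\perp$. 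Your $K=2^\kappa$ lies in $\wrn^\perp$, so if your argument were correct it would show that no nonmetrizable ccc convex WRN $L$ can exist, contradicting that example. A further warning sign is that this route never uses the ccc hypothesis, which is genuinely restrictive for compact convex subsets of $\er^\kappa$ (for non-ccc $K$ the convex set $P(K)$ is not ccc).

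Your alternative sketch is essentially the right idea and is close to the paper's actual construction: there one takes $K\subseteq 2^{\kappa\times\Delta}$ with $\Delta=\{(p,q)\in\qu^2: p<q\}$, consisting of all $f$ whose finite cores $\core_s(f)$ (intersections of the strips $\{x(\xi)<q\}$ or $\{x(\xi)>p\}$ prescribed by $f$) are nonempty; convexity of $L$ guarantees that the two cores arising from a clopen split at a single coordinate intersect, and ccc of $L$ then yields the infinite independent sequence required by the criterion of Proposition~\ref{ba:2}. As written, however, your sketch leaves all the substantive steps unproved: the map $\Phi$ is not continuous, so one must control what the closure adds; it is not clear that the adequate closure $\widetilde{K_0}$ admits any continuous surjection onto $L$ extending the natural one; the ccc of $K_0$ is asserted rather than shown; and the independence extraction needs the $\Delta$-system refinement of Corollary~\ref{clopen:2} before Claim-2-type nonemptiness can be combined with ccc. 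The paper's definition of $K$ through nonemptiness of cores is designed exactly to sidestep the closure issues, and the surjection $\theta(f)=\bigcap_s\overline{\core_s(f)}$ together with its continuity still requires a careful argument that your outline does not supply.
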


\begin{proof}
With $L\sub \er^\kappa$ given, we first fix some notation. For any $a\in \er$ and $\xi<\kappa$
write
\[ V_\xi^0(a)=\{x\in L : x(\xi)<a\},\quad V_\xi^1(a)=\{x\in L : x(\xi)>a\}.\]
Moreover, we denote
$ \Delta=\{\la p,q\ra\in \qu^2: p<q\}.$

We shall consider functions $f:\kappa\times\Delta\to 2$.
For such $f$ and a finite set $s\sub \kappa\times\Delta$ we set
$$ \core_s(f)=\left\{x\in L : \forall \la \xi,p,q\ra \in s\ (f(\xi,p,q)=0 \Rightarrow x_\xi <q), \ (f(\xi,p,q)=1 \Rightarrow x_\xi >p) \right\}=$$\[ =\bigcap \left\{V_\xi^0(q): \exists p,~ \la \xi,p,q\ra\in s \mbox{ and } f(\xi,p,q)=0\right\}\cap\]
\[\cap \bigcap\left\{V_\xi^1(p): \exists q,~ \la \xi,p,q\ra\in s \mbox{ and } f(\xi,p,q)=1\right\}.\]
Note that $\core_s(f)$ is in fact determined by $f|s$ so below we also
consider $\core_s(\vf)$ whenever some $\vf:s\to 2$ is given.

We define the space $K$ as follows
\[K=\left\{f:  \core_s(f)\neq\emptyset\mbox{ for every finite } s\sub \kappa\times\Delta\right\},\]
and check that $L$ is a continuous image of $K$ and that $K\in\wrnb^\perp$.
By the very definition, $K$ is a compact subspace of $2^{\kappa\times\Delta}$.
\medskip

\noindent {\sc Claim 1}. There is a continuous surjection $\theta: K\To L$.
\medskip

Define $\theta(f)$ to be the unique point in $\bigcap_s \overline{ \core_s(f)}$ (where the intersection is taken over
all finite $s\sub\kappa\times\Delta$). To see that the definition is correct note that such an intersection is
nonempty by compactness of $L$. Moreover, $\bigcap_s \overline{ \core_s(f)}$ cannot contain two
distinct points $x,x'\in L$ for, otherwise, we have $x(\xi)\neq x'(\xi)$ for some $\xi<\kappa$;
say that   $x(\xi)<  x'(\xi)$ and we can pick rational numbers $p,q$ so that $x(\xi)<p<q<x'(\xi)$.
Then
examine the value of $f(\xi,p,q)$ to get a contradiction.
Moreover, if $x \in L$, then it is immediate that the function $f:\kappa \times \Delta \rightarrow 2$ given by $f(\xi, p,q)=1$ if $x(\xi)>p$ and zero otherwise belongs to $K$ and satisfies $\theta(f)=x$, so $\theta$ is surjective. 

To verify the continuity of $\theta$ note that sets of the form $V_\xi^1(p),~V_\xi^0(q)$ form a subbase of the topology on $L$. The following equalities show that the preimages of these sets under $\theta$ are open:

\[ \theta^{-1}(V_\xi^1(p))=\bigcup_{p',q\in\qu, ~p<p'<q}\{f \in K: f(\xi,p',q)=1\},\]
\[\theta^{-1}(V_\xi^0(q))=\bigcup_{p,q'\in \qu,~p<q'<q}\{f \in K: f(\xi,p,q')=0\}.\]
We carefully check the first one, the second one being analogous. If $f(\xi,p',q)=1$ for some $q>p'>p$ then $\core_s(f) \subset V_\xi^1(p')$ for any $s$ that contains $(\xi,p',q)$. So
\[\theta(f) \in \bigcap_s \overline{\core_s(f)} \subseteq \overline{V_\xi^1(p')} \subset V_\xi^1(p).\]
For the reverse inclusion, suppose that $f(\xi,p',q)=0$ whenever $p<p'<q$. Then $\core_s(f) \subset V_\xi^0(q)$ whenever $(\xi,p',q)\in s$. Therefore
\[\theta(f)\in \bigcap_s \overline{\core_s(f)} \subset \bigcap_{q>p}\overline{V_\xi^0(q)}.\]
It follows that $\theta(f)_\xi \leq p$, so that $\theta(f)\not\in V_\xi^1(p)$.

\bigskip
It remains to prove that $K\in\wrnb ^\perp$. Consider first a finite set
$s\sub \kappa\times\Delta$ and two functions $\vf,\psi:s\to 2$ that
differ only at $\la\xi_0,p_0,q_0\ra\in s$; say that $\vf(\xi_0,p_0,q_0)=0$ and $\psi(\xi_0,p_0,q_0)=1$.
Assume that the clopens $C(\vf)$ and $C(\psi)$ are nonempty.

Set $s'=s\sm\{\la \xi_0,p_0,q_0\ra\}$; note that $\core_{s'}(\vf)=\core_{s'}(\psi)$ and
\[ \core_{s}(\vf)=\core_{s'}(\vf)\cap V_{\xi_0}^0(q_0),\quad \core_{s}(\psi)=\core_{s'}(\psi)\cap V_{\xi_0}^1(p_0).\]

\noindent {\sc Claim 2}.
The set $U= \core_{s'}(\vf)\cap V_{\xi_0}^0(q_0)\cap V_{\xi_0}^1(p_0) = \core_s(\varphi)\cap \core_s(\psi)$ is not empty.
\medskip

This follows from the convexity of $L$: take $x\in \core_s(\vf)$ and $y\in \core_s(\psi)$.
Then $x(\xi_0)<q_0$ and $y(\xi_0)>p_0$ so there is $z$ lying on the segment joining $x$ and $y$
such that $z(\xi_0)\in (p_0,q_0)$. Hence $z\in U$ (note that every $\core$ is a convex subset of $L$).
\medskip

In order to prove that $K\in\wrnb^\perp$ we check the criterion of Proposition \ref{ba:2} as we did before:
if $\cC\sub\clop(K)$ is an uncountable
family then, as in Theorem \ref{cs:1}, we find uncountably many $C_\alpha\in\cC$ for which
there are nonempty sets $C(\vf_\alpha)\sub C_\alpha$ and $C(\psi_\alpha)\sub K\sm C_\alpha$,
where $\vf_\alpha$ and $\psi_\alpha$ have the same finite domain $s_\alpha = s'_\alpha\cup\{\zeta_\alpha\}$ and differ at exactly one point $\zeta_\alpha$. As stated in Corollary~\ref{clopen:2}, the assignment $\alpha\mapsto \zeta_\alpha$ can be taken one-to-one. We can also suppose that the sets $s'_\alpha$ form a $\Delta$-system with root $R$, that $\zeta_\alpha\not\in s'_\beta$ and $\phi_\alpha|_R = \psi_\beta|_R$ for all $\alpha,\beta$. For every $\alpha$ consider the open set
\[U_\alpha =\core_{s_\alpha}(\varphi_\alpha)\cap \core_{s_\alpha}(\psi_\alpha),\]
as in Claim 2.
Since $L$ is \textit{ccc}, it follows from Claim 2 that we can find an infinite sequence of indices $\alpha_1,\alpha_2,\ldots$ such that
$U_{\alpha_1}\cap U_{\alpha_2}\cap\cdots\cap U_{\alpha_n}\neq\emptyset$
for every $n$. We claim that the sequence of pairs $(C(\varphi_{\alpha_n}),C(\psi_{\alpha_n}))$ is independent, as desired. For this, we must find
$f\in C(\gamma_1)\cap C(\gamma_2)\cap\cdots\cap C(\gamma_n)$
for any choice of $\gamma_i\in \{\varphi_i,\psi_i\}$. Pick $x\in U_{\alpha_1}\cup\cdots\cup U_{\alpha_n}$ and then define $f$ so that
$f|_{s_{\alpha_i}}$ agrees with $\gamma_{\alpha_i}$ for all $i\leq n$, while for $t=\la \xi,p,q\ra\not\in \bigcup_{i=1}^n s_{\alpha_i}$, we declare $f(t) = 1$ when $x(\xi)>p$ and $f(t)=0$ otherwise. Notice that there are no conflicts in this definition because of all the previous refinements on the family. We have that $f\in K$ because $x\in\bigcap_s \core_s(f)$, and $f\in C(\gamma_1)\cap C(\gamma_2)\cap\cdots\cap C(\gamma_n)$.
\end{proof}

Take any nonmetrizable WRN separable compact space $L_0$ (e.g.\ the split interval). Then, the space $L=P(L_0)$
of all regular probability measures on $L_0$ is a separable (so is  $ccc$) nonmetrizable convex WRN compact space.\footnote{The fact that $P(K)$ is WRN whenever $K$ is WRN is a consequence of the characterization of WRN compacta as those compact spaces $K$ for which $C(K)$ is weakly precompactly generated, but also as those compact spaces which can be weak*-embedded into the dual ball of a weakly precompactly generated Banach space; see \cite[Theorem 2.1.4 and 2.1.5]{MaCeThesis}.}
We can apply Theorem \ref{between:2} to get the following, somewhat suprising, result.

\begin{corollary}\label{between:3}
The class $\wrnb^\perp\sm \wrn^\perp$ contains zero-dimensional (necessarily nonmetrizable) 
compact spaces.
\end{corollary}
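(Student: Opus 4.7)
The plan is to apply Theorem \ref{between:2} to a carefully chosen convex compact space $L$ whose existence is guaranteed by the footnote in the excerpt. First I would take $L_0$ to be the split interval (double arrow space), which by Lemma \ref{LemmaSplitInterval} is a nonmetrizable WRN compactum that is moreover separable. Then I would set $L=P(L_0)$, the space of regular Borel probability measures on $L_0$ with the weak$^\ast$ topology. This $L$ is compact, convex (when regarded inside the locally convex space $C(L_0)^\ast$), separable (separability is inherited from $L_0$ via the dense set of finitely supported rational combinations of Dirac measures) and therefore $ccc$, nonmetrizable (a metrizable $P(L_0)$ would force $L_0$ itself to be metrizable, since $L_0$ embeds homeomorphically into $P(L_0)$ via Dirac measures), and WRN by the characterisation quoted in the footnote.

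Next I would realise $L$ as a convex compact subspace of some cube $\er^\kappa$ through the affine embedding $\mu\mapsto \bigl(\int f\,{\rm d}\mu\bigr)_{f\in C(L_0)}$, so that the hypotheses of Theorem \ref{between:2} are met. The theorem then supplies a compact space $K\sub 2^{\kappa\times\Delta}$ with $K\in\wrnb^\perp$ together with a continuous surjection $\theta:K\to L$. Since $K$ sits inside a Cantor cube, $K$ is automatically zero-dimensional.

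Finally I would observe that $L$ itself is a nonmetrizable WRN continuous image of $K$, which by the very definition of $\wrn^\perp$ witnesses that $K\notin \wrn^\perp$. Hence $K\in\wrnb^\perp\sm\wrn^\perp$, and the parenthetical remark ``necessarily nonmetrizable'' is automatic, because a metrizable $K$ would make every continuous image metrizable and therefore place $K$ in $\wrn^\perp$.

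There is no real obstacle here beyond checking the three properties of $L=P(L_0)$ (convexity, $ccc$, WRN-ness) and invoking the previous theorem; the only step requiring external input is that $P(\cdot)$ preserves the WRN property, which is flagged in the footnote of the excerpt.
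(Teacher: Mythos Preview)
Your proposal is correct and follows exactly the same route as the paper: take $L_0$ the split interval, set $L=P(L_0)$, and apply Theorem~\ref{between:2} to obtain a zero-dimensional $K\in\wrnb^\perp$ mapping onto the nonmetrizable WRN space $L$. You even spell out a few details (the affine embedding of $L$ into $\er^\kappa$, the Dirac embedding showing $P(L_0)$ is nonmetrizable) that the paper leaves implicit.
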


\section{The orthogonal class of zero-dimensional compacta}\label{ocZDs}

In this section we denote by $\zerodimensional$ the class of zero-dimensional compact spaces. The orthogonal class $\zerodimensional^{\perp}$  can be easily characterized as follows:

\begin{lemma}
$K$ belongs to $\zerodimensional^{\perp}$ if and only if it contains at most countably many different clopens.
\end{lemma}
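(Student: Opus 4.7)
The plan is to use the well-known fact that a zero-dimensional compact Hausdorff space is metrizable if and only if it has a countable base, and that its clopens form a base, so such a space is metrizable precisely when it has only countably many clopens. Both implications then follow by natural transfer of the clopen-counting between $K$ and its continuous images.

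For the implication $(\Leftarrow)$, I would assume $K$ has at most countably many clopens and take a continuous surjection $f\colon K\to L$ with $L$ zero-dimensional. The map $f^{-1}\colon \Clop(L)\to\Clop(K)$ is injective because $f$ is surjective (if $f^{-1}(A)=f^{-1}(B)$ then $A=f(f^{-1}(A))=f(f^{-1}(B))=B$), so $L$ has at most countably many clopens. Since $L$ is zero-dimensional compact Hausdorff, its clopens form a base, hence $L$ is second countable and therefore metrizable.

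For the implication $(\Rightarrow)$, I would prove the contrapositive. Assume $K$ has uncountably many distinct clopens and pick a family $\{C_\alpha:\alpha<\omega_1\}\subseteq \Clop(K)$ of pairwise distinct clopens. Define a continuous map
\[
f\colon K\to 2^{\omega_1},\qquad f(x)(\alpha)=\chi_{C_\alpha}(x),
\]
and let $L=f(K)$, which is a zero-dimensional compact continuous image of $K$. For each $\alpha<\omega_1$ the set $\widetilde C_\alpha=\{y\in L:y(\alpha)=1\}$ is clopen in $L$ and $f^{-1}(\widetilde C_\alpha)=C_\alpha$; since the $C_\alpha$ are distinct and $f^{-1}$ is injective on $\Clop(L)$, the $\widetilde C_\alpha$ are uncountably many distinct clopens of $L$. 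By the same second-countability argument as above, $L$ is not metrizable, so $K\notin\zerodimensional^\perp$.

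There is no real obstacle here; the only substantive ingredient is the standard equivalence between metrizability and countability of the clopen algebra for zero-dimensional compacta, plus the elementary observation that the preimage operation is injective from $\Clop(L)$ into $\Clop(K)$ whenever $f\colon K\to L$ is a continuous surjection.
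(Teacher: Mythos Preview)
Your proof is correct and follows essentially the same approach as the paper's. The only cosmetic difference is in the $(\Rightarrow)$ direction: the paper invokes the canonical surjection $K\to\ult(\Clop(K))$ onto the Stone space of the clopen algebra, while you construct an explicit map into $2^{\omega_1}$ using $\omega_1$ chosen clopens---but your map is just a coordinate restriction of the paper's, and the underlying idea (use the clopens themselves to manufacture a nonmetrizable zero-dimensional image) is identical.
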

\begin{proof}
Notice that $K$ has countably many different clopens if and only if its clopen algebra is countable, which in turn is equivalent to the fact that the Stone space $\ult(\Clop(K))$ of its clopen algebra is metrizable.
Bearing in mind that $K$ always maps continuously onto $\ult(\Clop(K))$, we obtain that if $ K \in \zerodimensional^{\perp}$ then it contains at most countably many different clopens.
On the other hand, suppose that $f:K \rightarrow L$ is a continuous map onto a zerodimensional compact space $L$. If $L$ were nonmetrizable, then it would contain uncountably many clopens and, since the preimage of a clopen is a clopen, $K$ would have the same property. Thus, if $K$ contains at most countably many different clopens then it belongs to  $\zerodimensional^{\perp}$.
\end{proof}

From the previous lemma and Lemma \ref{lemhereditarilyCperp} one could give a characterization for the class $\zerodimensional^{(\perp)}$ which turns out to be unsatisfactory in the sense that 
it does not seem to be useful to determine whether this class contains
 nonmetrizable compact spaces.
Let us recall that even a simpler question, whether there are nonmetrizable compact spaces containing
no zero-dimensional nonmetrizable closed subspaces, is somewhat delicate: 
Koszmider \cite{Kosz16} gave the first ZFC example of such a space;
Marciszewski \cite{WM21} gave a consistent example which is Eberlein compact. 

The class  $\zerodimensional^{(\perp)}$ has been studied in the literature; see, e.g., 
\cite[Question 1.1(2)]{Kosz16}, \cite[Question 12 (374)]{GruMoo07}  and \cite[Question 4 (1176)]{Kosz07}. 
It seems to be an open problem whether it is consistent that $\zerodimensional^{(\perp)}$
is trivial (i.e.~it consists solely  of metrizable compacta).
Nevertheless, it may happen (in some models of set theory) 
that $\zerodimensional^{(\perp)}$ contains some
 nonmetrizable Corson compact spaces 
and compact spaces that are not hereditarily separable; on the other hand, 
 this orthogonal class can  contain neither nonmetrizable Eberlein compacta nor 
Rosenthal compacta  (see \cite[Proposition 4.2]{Kosz16}).
It follows that  $\corson \cap \zerodimensional^{(\perp)}$ is trivial  under $MA_{\omega_1}$.
Let us recall a recent result from \cite{Pl20} showing that 
a connected version of Kunen's $L$-space constructed  under CH 
is in $ \corson\cap \zerodimensional^{(\perp)}$.

Let us note  that any Souslin line is a WRN compact space 
(since it is a linearly ordered compact space \cite[Theorem 8.7]{GlasnerMegrelishvili14}) and
belongs to $\zerodimensional^{(\perp)}$, which can be demonstrated
using the argument from  \cite[Proposition 4.2(5)]{Kosz16}). 
We show below, using  the so called split compact spaces introduced by Koszmider,
that nonmetrizable WRN compact spaces in
$\zerodimensional^{(\perp)}$ can be constructed  under Martin's axiom and the negation of CH. 

\begin{definition}
	\label{DEFIsplitcompact}
Let $M$ be a metric compact space, $L$ a compact space, $\kappa$ an ordinal, $\{ r_\xi: \xi < \kappa \}$ a family of distinct points of $L$ and $f_\xi \colon L \setminus \{r_\xi\} \rightarrow M$ a continuous function for every $\xi < \kappa$.
The split $L$ induced by $\{f_\xi : \xi < \kappa \}$ is the subspace $K$ of $L^{\{\ast\}} \times M^\kappa$ consisting of points of the form
\[ \{ x_{\xi,t} : \xi < \kappa, ~t\in M \}\cup \{x_r : r \in L \setminus \{ r_\xi: \xi < \kappa \} \},\]
where
\begin{itemize}
	\item $x_{\xi,t}(\ast)=r_\xi$, $x_{\xi,t}(\xi)=t$ and $x_{\xi,t}(\eta)=f_\eta (r_\xi)$ if $\eta \neq \xi$.
	\item $x_r(\ast)= r$ and $x_r(\xi)=f_\xi(r)$ for all $r\in  L \setminus \{ r_\xi: \xi < \kappa \}$ and every $\xi < \kappa$.
\end{itemize}	
\end{definition}

\bigskip

 The classical split interval is an example of a split compact space of this form. We provide in Theorem \ref{THEOsplittedWRN} a sufficient condition for a split compact space to be WRN. For that purpose, we need to extend the concept of independent functions to functions taking values in any compact space.

\begin{definition}
\label{DefinitionLindependent}
% A sequence $\left(A_n ^0, A_n ^1\right)_{n \in \N}$ of disjoint pairs of subsets of a set $S$ is said to be \emph{independent} if~ $\bigcap_{k=1}^n A_k^{\epsilon(k)} \neq \emptyset$ for every $n\in \N$ and every $\epsilon \colon \lbrace 1,2,...,n \rbrace \rto \lbrace 0,1 \rbrace $.
Let $K$ and $M$ be compact spaces. A sequence of functions $f_n \colon K \rto M$ is said to be \emph{$M$-independent} if there exist closed disjoint sets $C,C'$ in $M$ such that  $\left(f_n^{-1}(C), f_n^{-1}(C')\right)_{n \in \N}$ is independent.

 We say that the sequence of functions $f_{\xi_n}\colon L \setminus \{r_{\xi_n} \} \rto M$ is \textit{$M$-independent} if there exist extensions (possibly not continuous)  $g_{\xi_n} \colon L \rto M$ of $f_{\xi_n}$ for each $n \in \N$ such that the sequence $g_{\xi_n}$ is $M$-independent.
\end{definition}

 Notice that $f_{\xi_n}$ is $M$-independent if and only if every extension provides an $M$-independent sequence, i.e.~if $g_n$ and $h_n$ are different extensions of $f_{\xi_n}$ then the sequence $g_n$ is $M$-independent if and only if $h_n$ is $M$-independent. Namely, if  $g_n$ is not $M$-independent then for every closed disjoint sets $C,~C'$ of $M$ there are disjoint finite subsets $S_1, ~S_2$ of $\N$ such that
 \[ \left(\bigcap_{k \in S_1} g_k^{-1}(C) \right)\cap \left(\bigcap_{k' \in S_2} g_{k'}^{-1}(C') \right) = \emptyset.\]
 But then
  \[ \left(\bigcap_{k \in S_1} h_k^{-1}(C) \right)\cap \left(\bigcap_{k' \in S_2} h_{k'}^{-1}(C') \right) \sub \{ r_{\xi_n}: n \in S_1 \cup S_2 \} \]
 is a finite set. Now, a suitable choice of finite sets $S_1' \supseteq S_1$ and $S_2' \supseteq S_2$ shows that $h_n$ is not $M$-independent.

The following lemma is a simple extension of the well-known
Rosenthal Theorem which states that every sequence of  functions $f_n: S\rightarrow [0,1]$ defined on a set $S$ contains a pointwise convergent subsequence or a $[0,1]$-independent subsequence \cite{Rol1}.

\begin{lemma}
	\label{LEMdicextension}
	Let $S$ be a set, $M$ a metric compact space and $f_n\colon S \rto M$ a  sequence of functions.
	Then $f_n$ has a pointwise convergent subsequence or an $M$-independent subsequence. Moreover, $M$-independent sequences do not have pointwise convergent subsequences.
\end{lemma}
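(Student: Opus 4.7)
The plan is to reduce to the classical Rosenthal theorem for $[0,1]$-valued functions by using the fact that $M$, being a metric compactum, embeds homeomorphically into the Hilbert cube. Fix such an embedding $h\colon M \hookrightarrow [0,1]^\omega$ and, for each $k\in\N$, let $g_k\colon M\to [0,1]$ denote the composition of $h$ with the $k$-th coordinate projection; each $g_k$ is continuous.

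For the main dichotomy I would iterate classical Rosenthal coordinate by coordinate. At stage $k$, I apply Rosenthal to the real-valued sequence $g_k \circ f_n$ restricted to the subsequence produced at stage $k-1$. If this yields a $[0,1]$-independent sub-subsequence with witnesses $p<q$, I set $C = g_k^{-1}([0,p])$ and $C' = g_k^{-1}([q,1])$, which are disjoint closed subsets of $M$; the pairs
\[
\bigl( f_n^{-1}(C),\, f_n^{-1}(C') \bigr) = \bigl( \{s : g_k(f_n(s))\le p\},\, \{s : g_k(f_n(s))\ge q\} \bigr)
\]
are then independent, so the sub-subsequence is $M$-independent and I stop. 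Otherwise Rosenthal returns a sub-subsequence on which $g_k \circ f_n$ converges pointwise, which I pass to before proceeding to stage $k+1$. If the first alternative never occurs, a standard diagonal extraction produces a subsequence $(f_{n_j})$ such that $g_k \circ f_{n_j}$ converges pointwise for every $k$; consequently $h\circ f_{n_j}(s)$ converges coordinatewise, hence in the product topology of $[0,1]^\omega$, with limit in the closed set $h(M)$. Composing with $h^{-1}$ then yields the pointwise limit of $(f_{n_j})$ in $M$.

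For the moreover clause, let $f_n$ be $M$-independent with witnesses $C, C'$ closed and disjoint in $M$. Since $C, C'$ are then disjoint compact subsets of a metric space, the Urysohn-type function
\[
g(m) = \frac{d(m,C)}{d(m,C)+d(m,C')}
\]
is well-defined and continuous, with $g\equiv 0$ on $C$ and $g\equiv 1$ on $C'$. The pairs $\bigl(\{g\circ f_n\le 1/3\},\,\{g\circ f_n\ge 2/3\}\bigr)$ respectively contain $\bigl(f_n^{-1}(C), f_n^{-1}(C')\bigr)$, and enlarging pairs of disjoint sets preserves independence, so $g \circ f_n$ is $[0,1]$-independent. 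If some subsequence $f_{n_j}$ converged pointwise in $M$ to a function $f$, continuity of $g$ would give $g\circ f_{n_j} \to g\circ f$ pointwise in $[0,1]$, contradicting the moreover part of the classical scalar Rosenthal theorem.

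I expect the subtle step to be the moreover clause. A naive approach would try to derive pointwise convergence of $\mathbf{1}_{f_{n_j}^{-1}(C)}$ from pointwise convergence of $f_{n_j}$, but this can fail at points whose limit lies on the topological boundary of $C$. Sidestepping this via the continuous separating function $g$ is what allows the classical scalar moreover to transfer cleanly to general metric-valued sequences.
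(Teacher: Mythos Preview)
Your proof of the main dichotomy is essentially the paper's: embed $M$ in the Hilbert cube, apply scalar Rosenthal to each coordinate in turn, and diagonalize. The paper phrases it contrapositively (assume no $M$-independent subsequence, deduce that each coordinate sequence has a pointwise convergent subsequence), but it is the same iteration.

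For the moreover clause you take a different route. The paper argues directly: given a subsequence $(f_{n_k})$ it selects
\[
t\in\bigcap_{k}\bigl(f_{n_{2k}}^{-1}(C)\cap f_{n_{2k+1}}^{-1}(C')\bigr),
\]
so that $(f_{n_k}(t))$ oscillates between the disjoint closed sets $C$ and $C'$. You instead manufacture a Urysohn function $g\colon M\to[0,1]$ separating $C$ from $C'$, note that $(g\circ f_n)$ is $[0,1]$-independent, and then invoke ``the moreover part of the classical scalar Rosenthal theorem''.

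That last invocation is a gap: there is no such moreover for functions on an \emph{arbitrary} set $S$, and the assertion is in fact false. Take $S=2^{<\omega}$ and put $f_n(\sigma)=\sigma(n)$ for $n<|\sigma|$, $f_n(\sigma)=0$ otherwise; then $(f_n)$ is $[0,1]$-independent (for any $\varepsilon\in 2^N$ the string $\sigma=\varepsilon$ realises the prescribed pattern on $\{0,\dots,N-1\}$), yet $f_n\to 0$ pointwise. The paper's direct argument has the same defect in disguise: it extracts a point from an \emph{infinite} intersection, which independence (only finite intersections nonempty) does not supply on a bare set --- and indeed the paper slips into writing ``compact subsets of $K$'' at that very step. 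In the only place the moreover clause is used (Theorem~\ref{THEOsplittedWRN}) the domain is a compact space and the functions are continuous; with those extra hypotheses both the paper's argument and your reduction go through. So your Urysohn reduction is sound, but the black box you cite needs the same unstated compactness hypothesis as the lemma itself.
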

\begin{proof}
	Take $q\colon M \rto [0,1]^\N$ an embedding from $M$ into the Hilbert cube and denote by $q_n$ the $n$th-coordinate function of $q$.
	Suppose $f_n$ does not have an $M$-independent subsequence.
	Then, $q_1 \circ f_n$ does not have a $[0,1]$-independent subsequence. By Rosenthal's Theorem, there exists a convergent subsequence  of $q_1 \circ f_n$.
	A standard diagonal argument provides a subsequence $f_{n_k}$ such that
	$\{q_m \circ f_{n_k}\}_{k\in \N}$ converges for every $m \in \N$.
	Thus, $f_{n_k}$ is a convergent subsequence of $f_n$.

	For the last part of the lemma, take $C$ and $C'$ closed disjoint sets witnessing the $M$-independence of a sequence $f_n\colon S \rto M$.
	Let $(f_{n_k})_k$ be any subsequence. Since $\left(f_{n_k}^{-1}(C), f_{n_k}^{-1}(C')\right)_{n \in \N}$ is an independent sequence consisting of compact subsets of $K$, we can take 
\[t \in \bigcap_{k \in \N} \left(f_{n_{2k}}^{-1}(C) \cap f_{n_{2k+1}}^{-1}(C') \right).\] 
Thus, $f_{n_k}(t)$ cannot be a convergent sequence since $C$ and $C'$ are disjoint closed sets.
\end{proof}

\begin{theorem}	
 	\label{THEOsplittedWRN}
 	Let $K$ be the split $L$ induced by $\{f_\xi : \xi < \kappa \}$, where $L$ and $\{f_\xi : \xi < \kappa \}$ are as in Definition \ref{DEFIsplitcompact}. If $\{f_\xi : \xi < \kappa \}$ does not contain $M$-independent sequences and $L$ is WRN, then $K$ is WRN.

\end{theorem}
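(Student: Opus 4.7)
The plan is to verify that $K$ is WRN by exhibiting an embedding of $K$ into a cube satisfying the criterion of Theorem~\ref{Gonzalo}. I would fix an embedding $\iota_L \colon L \hookrightarrow [0,1]^I$ witnessing that $L$ is WRN, and an embedding $h = (h_n)_{n \in \N} \colon M \hookrightarrow [0,1]^\N$ witnessing that the metrizable compactum $M$ is WRN; in both, no infinite independent sequence of coordinate pairs occurs at any pair of rationals. Using the inclusion $K \subseteq L \times M^\kappa$ and the continuous coordinate projections $\pi \colon K \to L$ and $\tilde f_\xi \colon K \to M$, I would compose with $\iota_L$ and $h$ to obtain an embedding of $K$ into $[0,1]^{I \sqcup (\kappa \times \N)}$ whose coordinate functions are $\iota_L^i \circ \pi$ for $i \in I$ and $h_n \circ \tilde f_\xi$ for $(\xi, n) \in \kappa \times \N$.

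Then I would fix rationals $a < b$, suppose toward a contradiction that there is an infinite independent sequence of coordinate pairs indexed by $(\alpha_j)_{j \in \N}$, and split into cases. If infinitely many $\alpha_j$ lie in $I$, then surjectivity of $\pi$ transfers the independence on $K$ to an infinite independent sequence of coordinate pairs on $L$'s embedding, contradicting that $L$ is WRN. Otherwise, after thinning, $\alpha_j = (\xi_j, m_j) \in \kappa \times \N$; by a Ramsey-type extraction I may further assume either (2a) all $\xi_j$ coincide with a single $\xi_0$, or (2b) the $\xi_j$ are pairwise distinct. Case (2a) is handled by noting that $\tilde f_{\xi_0}$ is surjective onto $M$ (it sends $x_{\xi_0, t}$ to $t$), so the pairs on $K$ push forward to an infinite independent sequence $\bigl(h_{m_j}^{-1}([0,a]), h_{m_j}^{-1}([b,1])\bigr)$ on $M$, contradicting the WRN property of the embedding $h$.

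The substantive case is (2b), where the hypothesis enters. Because $(f_{\xi_j})$ contains no $M$-independent subsequence, Lemma~\ref{LEMdicextension} yields a subsequence along which any choice of extensions $g_{\xi_j} \colon L \to M$ converges pointwise to some $g \colon L \to M$; extracting further, the WRN property of $h$ together with a Rosenthal-type argument also provides $h_{m_j} \to \phi$ pointwise on $M$. The aim is then to combine these two convergences with the assumed $K$-independence of the pairs $\bigl(\tilde f_{\xi_j}^{-1}(h_{m_j}^{-1}([0,a])), \tilde f_{\xi_j}^{-1}(h_{m_j}^{-1}([b,1]))\bigr)$ by tracing each independence witness back to $L$ according to its form: $x_r$ with $r \notin \{r_\xi\}$, $x_{\xi', t}$ with $\xi' \notin \{\xi_j\}$, or $x_{\xi_j, t}$ with $\xi_j$ appearing among the chosen coordinates. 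The upshot should be a fixed pair of disjoint closed sets $C, C' \subseteq M$ for which $(g_{\xi_j}^{-1}(C), g_{\xi_j}^{-1}(C'))$ is independent on $L$, contradicting the hypothesis via Definition~\ref{DefinitionLindependent}. The main obstacle is precisely this upgrade from the ``varying'' closed sets $h_{m_j}^{-1}([0,a])$ and $h_{m_j}^{-1}([b,1])$ to a fixed closed pair $(C, C')$: the level sets of the Baire-$1$ limit $\phi$ are in general not closed, so one needs either a Hausdorff-limit argument in the hyperspace $2^M$, or a judicious choice of the extension values $g_{\xi_j}(r_{\xi_j})$ to absorb the awkward witnesses of type $x_{\xi_j, t}$, in order to produce closed disjoint $C, C'$ witnessing $M$-independence of a subsequence of $(f_{\xi_j})$.
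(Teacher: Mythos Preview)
Your overall architecture---embed $K$ via the $L$-coordinates and the $M$-coordinates, then split an alleged independent sequence according to where its indices live---is the same as the paper's, and your cases (1) and (2a) are fine. The gap is exactly where you locate it, in case (2b): when the $\xi_j$ are distinct but the $m_j$ vary, you are faced with an independent sequence of pairs $\bigl(\tilde f_{\xi_j}^{-1}(h_{m_j}^{-1}[0,a]),\,\tilde f_{\xi_j}^{-1}(h_{m_j}^{-1}[b,1])\bigr)$ with \emph{moving} closed targets in $M$, and neither a Hausdorff-limit argument nor a clever choice of extension values will reliably produce a single disjoint closed pair $(C,C')$ witnessing $M$-independence of $(f_{\xi_j})$. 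The Hausdorff limits of $h_{m_j}^{-1}[0,a]$ and $h_{m_j}^{-1}[b,1]$ need not be disjoint, and even when they are, independence need not pass to the limit.

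The paper removes this obstacle with a one-line device that you are missing: instead of using the raw coordinates $h_n\circ\tilde f_\xi$, it uses the \emph{scaled} coordinates $\dfrac{h_n\circ\tilde f_\xi}{n}$. Then any function with index $n$ takes values in $[0,1/n]$, so at a fixed level pair $p<q$ the set $\{x:\frac{1}{n}h_n(\tilde f_\xi(x))\ge q\}$ is empty as soon as $n>1/q$. Hence an infinite independent sequence at levels $(p,q)$ can involve only finitely many values of $n$, and after thinning you may assume $n$ is constant. Now the closed sets $C=h_n^{-1}[0,np]$ and $C'=h_n^{-1}[nq,1]$ are \emph{fixed}, and the sequence $(\tilde f_{\xi_j})$ would be $M$-independent on $K$. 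The paper rules this out cleanly by first proving (via Lemma~\ref{LEMdicextension}) that $\{\tilde f_\xi:\xi<\kappa\}$ contains no $M$-independent sequence on $K$: given any $(\xi_n)$, the hypothesis plus Rosenthal gives a subsequence along which $f_{\xi_n}$ converges pointwise on $L$; since $\tilde f_{\xi_n}(x)=f_{\xi_n}(x(\ast))$ for all but at most one $n$, the sequence $\tilde f_{\xi_n}$ converges pointwise on $K$, hence is not $M$-independent. With this in hand and the scaling trick, case (2b) collapses to a one-line contradiction and no hyperspace or extension gymnastics are needed.
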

 \begin{proof}
 	Denote by $\pi_{\ast} \colon K \rto L$ the projection onto the first coordinate (i.e. $\pi_{\ast}(x)=x(\ast)$ for every $x \in K \sub L^{\{\ast\}} \times M^\kappa$) and by $\pi_\xi \colon K \rto M$ the projection onto the coordinate $\xi$, i.e.~$\pi_\xi (x)= x(\xi)$ for every $x \in K$, $\xi < \kappa$.
 	We claim that $\lbrace \pi_\xi : \xi < \kappa \rbrace$ does not contain $M$-independent sequences. Take a sequence $\pi_{\xi_n}$. Since $\{f_\xi : \xi < \kappa \}$ does not contain $M$-independent sequences, by Lemma \ref{LEMdicextension} we may suppose that $f_{\xi_n}$ is pointwise convergent, in the sense that $\left(f_{\xi_n}(x)\right)_{n \in \N, \xi_n \neq x}$ converges for every $x \in L$.
 	Notice that for every $x \in K$,  $\pi_{\xi_n}(x)= f_{\xi_n}(x(\ast))$ for all except at most one $n \in \N$. Thus,
 	the sequence $\pi_{\xi_n}$ is pointwise convergent and therefore it does not contain $M$-independent subsequences due to Lemma \ref{LEMdicextension}. Hence $\lbrace \pi_\xi : \xi < \kappa \rbrace$ does not contain $M$-independent sequences.
 	Since $L$ is WRN, there exists a family $\cF $ of continuous functions  from $L$ to $[0,1]$ separating points and with no independent sequences (Theorem \ref{Gonzalo}). Notice that the family of functions
 	\[ \{ \pi_\xi : \xi < \kappa \} \cup \{ f \circ \pi_{\ast}: f \in \cF \}\]
 	separates the points of $K$.
 	
 	Now take $q \colon M \rto [0,1]^\N$ an embedding from $M$ into the Hilbert cube, with $q_n$ the coordinate functions of $q$.
 	Set $\cF_n=\lbrace \frac{q_n \circ \pi_\xi}{n} : \xi < \kappa \rbrace $ and $\cF'= \bigcup_{n \in \N} \cF_n$.
 	Then, $\cF'$ does not contain independent sequences of functions. It follows that $\cF'\cup \{ f \circ \pi_{\ast}: f \in \cF \}$ is a family of continuous functions which separates the points of $K$ and with no independent subsequences. Therefore, $K$ is WRN again by Theorem \ref{Gonzalo}.
 \end{proof}

\begin{example}
	\label{EXAMFilippov}
	Set $L=[0,1]^2$, $M=\mathbb{S}$, where $\mathbb{S}$ is the unit sphere in $\erre^2$ with the Euclidean metric, $\{ r_\xi: \xi < \kappa \} \sub L$ and
	$f_\xi \colon L \setminus \{ r_\xi \} \rto M$ defined as
	$ f_\xi(x)=\frac{x-r_\xi}{d(x,r_\xi)}$ for every $\xi < \kappa$, where $d$ is the Euclidean distance in $[0,1]^2$. Let $K$ be the split $L$ induced by $\{f_\xi : \xi < \kappa \}$. $K$ is said to be a \emph{Filippov space}.\index{Filippov space} We claim that $K$ is WRN. By Theorem \ref{THEOsplittedWRN}, it is enough to check that every sequence $ f_{\xi_n}$ does not contain an $M$-independent subsequence or, equivalently, that it contains a convergent subsequence.
	However, since $r_{\xi_n}$ is a sequence in $[0,1]^2$, we may suppose without loss of generality that  $r_{\xi_n}$ converges to some $r \in [0,1]^2$. But then notice that
	$f_{\xi_n}(x)= \frac{x-r_{\xi_n}}{d(x,r_{\xi_n})}$ converges to
	$\frac{x-r}{d(x,r)}$ for every $x \neq r$.
	Passing to a subsequence if necessary, we may suppose that the sequence $f_{\xi_n}(r)$ is also convergent. Thus,
	$f_{\xi_n}$ does not contain $M$-independent subsequences and we conclude that $K$ is WRN.
\end{example}

\begin{corollary}
	\label{COROWRNwiththeproperty}
	Under Martin's axiom and the negation of CH there is a WRN nonmetrizable compact space in $\zerodimensional^{(\perp)}$.
\end{corollary}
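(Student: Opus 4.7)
The plan is to take the Filippov space from Example \ref{EXAMFilippov} and choose the family of blow-up points $\{r_\xi:\xi<\omega_1\}\subseteq[0,1]^2$ carefully, using $MA_{\omega_1}$, so that the resulting compactum $K$ lies in $\zerodimensional^{(\perp)}$. By Example \ref{EXAMFilippov}, every Filippov space built from $L=[0,1]^2$, $M=\mathbb{S}$ and the radial maps $f_\xi$ is automatically $\wrn$, irrespective of the choice of points, because any sequence of center points $r_{\xi_n}$ has a convergent subsequence in the compact $[0,1]^2$ which forces $f_{\xi_n}$ to converge off the limit and hence rules out $M$-independent subsequences. So $\wrn$ is for free; nonmetrizability is also for free as soon as $\kappa=\omega_1$, since each $r_\xi$ is replaced by a copy of $\mathbb{S}$ contributing uncountably many pairwise disjoint clopens in the fibre direction.

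The real content is ensuring that $K$ is hereditarily in $\zerodimensional^\perp$, and here I would invoke exactly Koszmider's construction scheme from \cite{Kosz16}, which under $MA_{\omega_1}$ produces a Filippov-type space whose every closed subspace admits only countably many clopens (equivalently, by Lemma \ref{lemhereditarilyCperp}, lies in $\zerodimensional^{(\perp)}$). First I would analyze clopens in a closed subspace $F\subseteq K$: a clopen $U\subseteq F$ is determined by finitely many coordinates $\ast,\xi_1,\ldots,\xi_m$, so it restricts on each blown-up circle $\{x_{\xi,t}:t\in\mathbb{S}\}\cap F$ to an arc-type clopen and is otherwise controlled by a clopen in $L$ pulled back via $\pi_\ast$, together with finitely many preimages under the $f_{\xi_i}$. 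Then I would assume for contradiction the existence of an uncountable antichain (or merely uncountable family) of clopens in some closed $F$ and, after $\Delta$-system and pigeonhole reductions as in Corollary \ref{clopen:2}, extract an uncountable configuration that can be realized only if the set $\{r_\xi:\xi<\omega_1\}$ fails a certain genericity condition.

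The genericity condition in question is engineered by listing all $\omega_1$ many potential ``bad'' configurations (uncountable families of candidate clopens coded up via finite data in $[0,1]^2\times\mathbb{S}$) and, for each, producing a ccc partial order whose generic filters ensure that the family of $r_\xi$'s avoids the configuration. Because there are only $\omega_1$ such tasks and each poset is ccc, $MA_{\omega_1}$ permits one to simultaneously meet dense sets in all of them, yielding a single family $\{r_\xi:\xi<\omega_1\}$ with the desired avoidance property. This is exactly Koszmider's argument, simply transported to the Filippov (rather than Alexandroff-double or other) variant of a split compactum; no new combinatorial idea is needed.

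The main obstacle is checking that Koszmider's genericity argument passes through \emph{verbatim} for the Filippov variant. Two things have to match: first, that the coding of potential uncountable clopen families in closed subspaces of the Filippov $K$ remains combinatorially manageable (i.e., definable from finite data about the points $r_\xi$ and the arcs cutting the fibre circles); second, that the ccc of the auxiliary posets survives the passage from the scalar split (used by Koszmider) to the circle-valued split. Both should go through because the fibres $\mathbb{S}$ are metric and the maps $f_\xi$ are canonical radial projections, so the same $\Delta$-system/Shelah-style arguments apply; nevertheless, this verification is the one place where the proof is not purely formal and requires genuine inspection of \cite{Kosz16}. Once it is in place, combining (a) $\wrn$ from Example \ref{EXAMFilippov}, (b) nonmetrizability from the choice $\kappa=\omega_1$, and (c) hereditary $\zerodimensional^\perp$ from the MA-genericity of $\{r_\xi\}$, yields the desired example.
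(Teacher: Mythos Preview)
Your approach is essentially the paper's: combine Example~\ref{EXAMFilippov} (every Filippov space is $\wrn$) with Koszmider's $MA_{\omega_1}$ construction of a nonmetrizable Filippov space in $\zerodimensional^{(\perp)}$. The paper's proof is a two-line citation of \cite[Theorem~4.5]{Kosz16}, and your proposal unpacks that citation.

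Two small points. First, your worry about ``transporting'' Koszmider's argument from some other split variant to the Filippov variant is misplaced: \cite[Theorem~4.5]{Kosz16} is stated and proved precisely for Filippov spaces, so no adaptation is needed and the ``main obstacle'' you flag does not exist. Second, your justification of nonmetrizability via ``uncountably many pairwise disjoint clopens in the fibre direction'' is off---the Filippov space is connected, so it has no nontrivial clopens at all; nonmetrizability follows instead from the fact that the projection $\pi_\xi$ onto each of the $\omega_1$ many circle coordinates is nonconstant, so the weight is at least $\omega_1$.
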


\begin{proof}
	It is a consequence of Example \ref{EXAMFilippov} and \cite[Theorem 4.5]{Kosz16}, where it is proved that there is a nonmetrizable Filippov space in $\zerodimensional^{(\perp)}$.
\end{proof}


\begin{thebibliography}{99}

\bibitem{AN83}
S.\ Argyros, S.\ Negrepontis {\em On Weakly $\mathscr{K}$-Countably Determined Spaces of Continuous Functions}, Proc. of the AMS \ 87(4):731--736,  (1983).			


\bibitem{AMN}
S.\ Argyros, S.\ Mercourakis,  S.\ Negrepontis, {\em Functional-analytic properties of
Corson-compact spaces}, Studia Math.\ 89 (1988), 197--229.

%\bibitem{aviles} A.\  Avil\'es,  {\em The number of weakly compact sets which generate a Banach space}, Israel J.\ Math.\ 159 (2007), 189--204.

%\bibitem{AviKosz} A. Avil\'es, P. Koszmider.\textit{ A continuous image of a Radon-Nikod\'ym compact space	which is not Radon-Nikod\'ym}. Duke Math. J., 162(12):2285--2299, (2013).

\bibitem{AMP18}
A.\  Avil\'es, G.\   Mart\'{\i}nez-Cervantes, G.\  Plebanek, {\em Weakly Radon-Nikod\'ym Boolean algebras and independent sequences},
 Fund.\ Math.\ 241 (2018),  45--66.


\bibitem{CN82}
 W.W.\ Comfort, S.A.\  Negrepontis, {\em  Chain conditions in topology},
 Cambridge Tracts in Mathematics 79, Cambridge University Press, Cambridge-New York, 1982.

%\bibitem{Do88} A.\ Dow, \emph{An empty class of nonmetric spaces}, Proc.\ Amer.\ Math.\ Soc.\ 104 (1988),  999--1001.


\bibitem{DP04}
M.\ D\v{z}amonja, G.\  Plebanek,
{\em Precalibre pairs of measure algebras}, Topology Appl. 144 (2004),  67--94.

%\bibitem{EDG79} G.~A. Edgar, \emph{A long {J}ames space}, Measure theory, Oberwolfach 1979 (Proc. Conf., Oberwolfach, 1979), Lecture Notes in Math., vol. 794, Springer, Berlin, 1980, pp.~31--37. \MR{81k:46018}

\bibitem{FabianGD}
M.J. Fabian, {\em G\^{a}teaux differentiability of convex functions and topology. Weak Asplund spaces}, J. Wiley \& Sons, Interscience, New York, 1997.

\bibitem{Farmaki} V.\ {Farmaki},
{\em The structure of Eberlein, uniformly Eberlein and Talagrand compact spaces in $\Sigma(\mathbb{R})$},
{Fundamenta Mathematicae},
128 (1986), {15--28}.


\bibitem{FP04}
D.H.\ Fremlin,  G.\ Plebanek, {\em Independence-precalibers of measure algebras}, preprint (2004); available on Plebanek's and Fremlin's webpages.

\bibitem{GM12} E.\ {Glasner}, M.\ {Megrelishvili},
	{\em Representations of dynamical systems on Banach spaces not containing $l_{1}$},
	{Trans. Am. Math. Soc.},
	364 (2012),{ no. 12}, {6395--6424}.


\bibitem{GlasnerMegrelishvili14} E.\ {Glasner}, M.\ {Megrelishvili},
{\em Eventual nonsensitivity and tame dynamical systems}, ArXiv preprint 1405.2588v7 (2016).


\bibitem{Ge76}
J.\ Gerlits, {\em On subspaces of dyadic compacta},
	Studia Sci. Math. Hungar. 11 (1976),  115--120 (1978).

\bibitem{GruMoo07}
G.\ Gruenhage, J.T. Moore, {\em Perfect compacta and basis problems in topology}, in: Elliott Pearl (Ed.) Open Problems in Topology 2, Elsevier, 2007.


%\bibitem{TopologyBook} M. Husek, J. van Mill. \textit{Recent Progress in General Topology}, North-Holland, Amsterdam, (1992).


\bibitem{Kosz16}
P.\ Koszmider, {\em On the problem of compact totally disconnected reflection of nonmetrizability}, 
Topology Appl. 213 (2016),  154--166.

\bibitem{Kosz07}
P.\ Koszmider, {\em The interplay between compact spaces and the Banach spaces of their continuous functions}, in: Elliott Pearl (Ed.) Open Problems in Topology 2, Elsevier, 2007.


 \bibitem{KM95}
 K.\ Kunen, J.\  van Mill, {\em Measures on Corson compact spaces}, Fund.\ Math.\ 147 (1995),  61--72.

%\bibitem{Ma21} W.\ Marciszewski, {\em On two problems concerning Eberlein compacta}, preprint (2021).

\bibitem{MP17}
M.\  Magidor, G.\  Plebanek, {\em  On properties of compacta that do not reflect in small continuous images},
Topology Appl.\ 220 (2017), 131--139.

%\bibitem{Pl97}
% G.\ Plebanek, {\em Nonseparable Radon measures and small compact spaces},
 %Fund.\ Math.\ 153 (1997),  25--40.
\bibitem{WM21}
W.\ Marciszewski, {\em
On two problems concerning Eberlein compacta}, ArXiv preprint 2103.03153 (2021). 

\bibitem{Ma17}
G.\   Mart\'{\i}nez-Cervantes,
\emph{On weakly Radon-Nikod\'ym compact spaces},
Israel J.\ Math.\ 221 (2017),  165--177.

\bibitem{MaCeThesis}
G.\   Mart\'{\i}nez-Cervantes,
\emph{Integration, Geometry and Topology in Banach spaces},
Ph.D. Thesis, Universidad de Murcia, 2017, \url{http://hdl.handle.net/10201/56253}.

\bibitem{Nam87}
I. Namioka,
\emph{Radon-Nikod\'ym compact spaces and fragmentability},
Mathematika 34 (1987),  258--281.




\bibitem{Ne84} S.\ Negrepontis, {\em Banach spaces and topology},
in: Handbook of set-theoretic topology,  Kenneth Kunen and Jerry E. Vaughan (edts), North-Holland, Amsterdam (1984),  1045--1142.

\bibitem{Pl20}
G.\ Plebanek, {\em Musing on Kunen's compact L-space},
ArXiv preprint 2012.01849 (2020).


\bibitem{Ro69}
H.P.\  Rosenthal, {\em On injective Banach spaces and the spaces $C(S)$},
Bull.\ Amer.\ Math.\ Soc.\ 75 (1969), 824--828.

\bibitem{Rol1} H.P.\ Rosenthal, {\em A characterization of Banach spaces containing $\ell_1$}, Proc.\ Nat.\ Acad.\ Sci.\ 71 (1974), 241--243.

\bibitem{Ro74}
H.P.\  Rosenthal, {\em The heredity problem for weakly compactly generated Banach spaces},
Compositio Math.\ 28 (1974), 83--111.

\bibitem{Ta75}
M. Talagrand, {\em Sur une conjecture de H. H. Corson},
Bull. Sci. Math.\ (2)\textbf{99} (1975), 211--212.

\bibitem{Ta77}
M. Talagrand, {\em Sur les espaces de Banach faiblement $\mathcal{K}$-analytiques}, C.R. Acad. Sci. Paris. Scr. A--B \textbf{285} (1977), A745--A748.

\bibitem{Ta79}
M. Talagrand, {\em Espaces de Banach faiblement $\mathcal{K}$-analytiques},
Ann. of Math.\ (2)\textbf{110} (1979), 407--438.



\bibitem{Tk12} V.V.\ Tkachuk, {\em Reflecting topological properties in continuous images},
Cent.\ Eur.\ J.\ Math.\ 10 (2012), 456-465.
\bibitem{TT15} M.G.\ Tkachenko and V.V.\ Tkachuk,
{\em More reflections in small continuous images}, Houston J.\ Math.  43 (2017),  1273--1289.

\bibitem{To00}
S.\ Todorcevic, {\em Chain-condition methods in topology},
Topology Appl.\ 101 (2000),  45--82.

%\bibitem{To84}
%S.\ Todorcevic, {\em Trees and Linearly Ordered Sets},
%Handbook of Set-Theoretic Topology (K. Kunen and J. E. Vaughan, eds), Elsevier Science Publishers B. V., Amsterdam, 235-293, (1984).

\end{thebibliography}
\end{document}